
\documentclass[english,10pt]{amsart}


\usepackage{amsmath, amssymb, amsthm, enumerate}
\usepackage[all]{xy}
\usepackage[activeacute]{babel}


\newtheorem{thm}{Theorem}[section]
\newtheorem{lem}[thm]{Lemma}
\newtheorem{prop}[thm]{Proposition}
\newtheorem{cor}[thm]{Corollary}

\newtheorem{defi}[thm]{Definition}

\newtheorem{rmk}[thm]{Remark}


\newcommand{\Hom}{\mathrm{Hom}}
\newcommand{\Gm}{\mathbb G _{m}}
\newcommand{\TspectraX}{Spt(\mathcal M)}
\newcommand{\stablehomotopyX}{\mathcal{SH}}

\newcommand{\neffstablehomotopyX}{\Sigma _{T}^{n}\stablehomotopyX^{\mathit{eff}}}
\newcommand{\qorthogonalTspectraX}{L_{<q}\TspectraX}

\newcommand{\nplusoneorthogonalTspectraX}{L_{<n+1}\TspectraX}
\newcommand{\qplusoneorthogonalTspectraX}{L_{<q+1}\TspectraX}
\newcommand{\qorthogonalstablehomotopyX}{L_{<q}\stablehomotopyX}
\newcommand{\qorthogonalX}{\stablehomotopyX ^{\perp}(q)}
\newcommand{\northogonalX}{\stablehomotopyX ^{\perp}(n)}

\newcommand{\nbiratTspectraX}{Spt (B_{n} \mathcal M)}

\newcommand{\stablenbiratTspectraX}{\stablehomotopyX (B_{n})}
\newcommand{\nwbiratTspectraX}{Spt (WB_{n} \mathcal M)}

\newcommand{\jwbiratTspectraX}{Spt (WB_{j} \mathcal M)}

\newcommand{\stablenwbiratTspectraX}{\stablehomotopyX (WB_{n})}

\newcommand{\kdim}{\mathrm{dim}}
\newcommand{\blowup}{\mathcal B \ell}



\numberwithin{equation}{section}

							

							
\begin{document}


\title{Birational Motivic Homotopy Theories and the Slice Filtration}


\author{Pablo Pelaez}
\address{Department of Mathematics, Rutgers University, U.S.A.}
\email{pablo.pelaez@rutgers.edu}


\subjclass[2000]{Primary 14F42}

\keywords{Birational Invariants, Motivic Homotopy Theory, 
			Motivic Spectral Sequence, Slice Filtration}


\begin{abstract}
	We show that there is an equivalence
	of categories between the orthogonal components for the slice filtration and
	the birational motivic stable homotopy categories which are constructed in this
	paper.  Relying on this equivalence, we
	are able to describe the slices for projective
	spaces (including $\mathbb P ^{\infty}$), Thom spaces and blow ups.
\end{abstract}


\maketitle


\begin{section}{Definitions and Notation}
		\label{Introd}
			
	Our main result, theorem \ref{thm.orthogonal=wbirationalTspectra}, shows that there is an equivalence
	of categories between the orthogonal components for the slice filtration (see definition \ref{def.orthogonality})
	and the weakly birational motivic stable homotopy categories which are constructed in this
	paper (see definition \ref{def.localizationAmod-weaklyBirat}).  Relying on this equivalence; we
	are able to describe over an arbitrary base scheme (see theorems \ref{thm.slices.proj.space}, \ref{thm.slices-thom} and
	\ref{thm.blowups}) the slices for projective
	spaces (including $\mathbb P ^{\infty}$), Thom spaces and blow ups.
	We also construct the birational motivic stable homotopy categories (see definition \ref{def.localizationAmod-Birat}), which are
	a natural generalization of the weakly birational motivic stable homotopy categories, and show 
	(see proposition \ref{prop.genericsmoothness-Quillenequiv}) that there exists a Quillen equivalence between 
	them when the base scheme is a perfect field.
	Our approach was inspired by the
	work of Kahn-Sujatha \cite{K-theory/0596} on birational motives, where  the existence
	of a connection between the layers of the slice filtration and birational invariants
	is explicitly suggested.  Furthermore, this approach allows to obtain analogues for the slice filtration
	in the unstable setting (see remark \ref{rmk.unstable.case}).
		
	In this paper $X$ will denote a Noetherian separated base scheme of finite Krull dimension,
	$Sch_{X}$ the category of schemes of finite type over $X$ and $Sm_{X}$ the full
	subcategory of $Sch_{X}$ consisting of smooth schemes over $X$ regarded
	as a site with the Nisnevich topology.	  All the maps between schemes will be considered over
	the base $X$.  Given $Y\in Sch_{X}$, all the closed subsets $Z$ of $Y$ will be considered
	as closed subschemes with the reduced structure.
	
	Let  $\mathcal M$ be the category of pointed simplicial presheaves in $Sm_{X}$
	equipped with the motivic Quillen model structure \cite{MR0223432} constructed by 
	Morel-Voevodsky \cite[p. 86 Thm. 3.2]{MR1813224}, taking the affine line $\mathbb A _{X}^{1}$
	as interval.  Given a map $f:Y\rightarrow W$ in $Sm_{X}$, we will abuse notation and denote
	by $f$ the induced map $f:Y_{+}\rightarrow W_{+}$ in $\mathcal M$ between the corresponding pointed
	simplicial presheaves represented by $Y$ and $W$ respectively.
	
	We define $T$ in $\mathcal M$ to be the pointed simplicial presheaf represented by 
	$S^{1}\wedge \mathbb G_{m}$, where $\mathbb G_{m}$ is the multiplicative group 
	$\mathbb A^{1}_{X}-\{ 0 \}$ pointed by $1$, and $S^{1}$ denotes the simplicial circle.
	Given an arbitrary integer $r\geq 1$,
	$S^{r}$ (respectively $\Gm ^{r}$) will denote the iterated smash product 
	$S^{1}\wedge \cdots \wedge S^{1}$ (respectively $\Gm \wedge \cdots \wedge \Gm$)
	with $r$-factors;
	$S^{0}=\Gm ^{0}$ will be by definition equal to the pointed simplicial presheaf $X_{+}$ represented by the base
	scheme $X$.
	
	Let $\TspectraX$ denote Jardine's category of symmetric $T$-spectra on 
	$\mathcal M$ equipped with the motivic model structure defined in 
	\cite[Thm. 4.15]{MR1787949} and let $\stablehomotopyX$ denote its homotopy category, 
	which is triangulated.  We will follow Jardine's notation \cite[p. 506-507]{MR1787949}
	where $F_{n}$ denotes the left adjoint to the $n$-evaluation functor
		\[ \xymatrix@R=0.5pt{\TspectraX \ar[r]^-{ev_{n}}& \mathcal M \\
					(X^{m})_{m\geq 0} \ar@{|->}[r]& X^{n}}
		\]
	Notice that $F_{0}(A)$ is just the usual infinite suspension spectrum $\Sigma _{T}^{\infty}A$.
	
	For every integer $q\in \mathbb Z$, we consider the following family of symmetric $T$-spectra
		\begin{align}
				\label{eqn.Cqeff}
		 	C^{q}_{\mathit{eff}}=\{ F_{n}(S^{r}\wedge \mathbb G _{m}^{s}\wedge U_{+}) 
				\mid n,r,s \geq 0; s-n\geq q; U\in Sm_{X}\}
		\end{align}
	where $U_{+}$ denotes the simplicial presheaf represented by $U$ with a disjoint base point.
	Let $\Sigma _{T}^{q}\stablehomotopyX^{\mathit{eff}}$ denote the smallest full triangulated subcategory of 
	$\stablehomotopyX$ which contains
	$C^{q}_{\mathit{eff}}$ and is closed under arbitrary coproducts.
	Voevodsky \cite{MR1977582} defines the slice filtration in $\stablehomotopyX$
	to be the following family of 
	triangulated subcategories
		\[ \cdots \subseteq \Sigma _{T}^{q+1}\stablehomotopyX^{\mathit{eff}} 
			\subseteq \Sigma _{T}^{q}\stablehomotopyX^{\mathit{eff}}
			\subseteq \Sigma _{T}^{q-1}\stablehomotopyX^{\mathit{eff}} \subseteq \cdots
		\]
	It follows from the work of Neeman \cite{MR1308405}, \cite{MR1812507} that the inclusion
		\[ i_{q}:\Sigma _{T}^{q}\stablehomotopyX^{\mathit{eff}}\rightarrow \stablehomotopyX
		\]
	has a right adjoint $r_{q}:\stablehomotopyX \rightarrow \Sigma _{T}^{q}\stablehomotopyX^{\mathit{eff}}$, 
	and that the following functors
		\begin{align*}
				f_{q}: & \stablehomotopyX \rightarrow \stablehomotopyX \\
				s_{<q}: & \stablehomotopyX \rightarrow \stablehomotopyX \\
				s_{q}: & \stablehomotopyX \rightarrow \stablehomotopyX
		\end{align*}
	are triangulated, where $f_{q}$ is defined as the composition $i_{q}\circ r_{q}$; and $s_{<q}, s_{q}$ 
	are characterized by the fact that for every $E\in \stablehomotopyX$, we have
	distinguished triangles in $\stablehomotopyX$:
		\[ \xymatrix{f_{q}E \ar[r]^-{\theta _{q}^{E}}& 
							E \ar[r]^-{\pi _{<q}^{E}}& s_{<q}E \ar[r]& S^{1}\wedge f_{q}E\\
							f_{q+1}E \ar[r]^-{\rho _{q}^{E}}& f_{q}E 
							\ar[r]^-{\pi _{q}^{E}}& s_{q}E \ar[r]& S^{1}\wedge f_{q+1}E}
		\]
	We will refer to $f_{q}E$ as the \emph{$(q-1)$-connective cover} of $E$, to $s_{<q}E$ as the 
	\emph{$q$-orthogonal component} of $E$, and to $s_{q}E$ as the \emph{$q$-slice} of $E$.
	It follows directly from the definition that $s_{<q+1}E, s_{q}E$ satisfy 
	that for every symmetric $T$-spectrum $K$ in $\Sigma _{T}^{q+1}\stablehomotopyX^{\mathit{eff}}$:
		\[ \Hom _{\stablehomotopyX}(K,s_{<q+1}E)=\Hom _{\stablehomotopyX}(K,s_{q}E)=0
		\]
		
\begin{defi}
		\label{def.orthogonality}
	Let $E\in \TspectraX$ be a symmetric $T$-spectrum.		
	We will say that $E$ is \emph{$n$-orthogonal},
	if for all $K\in \neffstablehomotopyX$
		$$\Hom _{\stablehomotopyX}(K,E)=0$$
	Let $\northogonalX$ denote the full subcategory of $\stablehomotopyX$ consisting
	of the $n$-orthogonal objects.
\end{defi}	
	
	The slice filtration admits an alternative definition in terms of (left and right) Bousfield localization
	of $\TspectraX$ \cite{MR2576905, MR2807904}.  The Bousfield localizations are constructed following
	Hirschhorn's approach \cite{MR1944041}.  In order to be able to apply Hirschhorn's techniques, it is necessary
	to know that $\TspectraX$ is \emph{cellular} \cite[Def. 12.1.1]{MR1944041} and \emph{proper} 
	\cite[Def. 13.1.1]{MR1944041}.  
	
\begin{thm}
		\label{Tspectra-cellular}
	The Quillen model category $\TspectraX$ is:
		\begin{enumerate}  
			\item  \label{Tspectra-cellular.a}  \emph{cellular} (see \cite{MR1860878}, 
							\cite[Cor. 1.6]{MR2197578} or \cite[Thm. 2.7.4]{MR2807904}).
			\item  \label{Tspectra-celllular.b}  \emph{proper} (see \cite[Thm. 4.15]{MR1787949}).
		\end{enumerate}
\end{thm}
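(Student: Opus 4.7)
The statement is essentially a ``citation theorem'' pulling together two standard facts, but let me indicate how one would argue each part from scratch.

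For cellularity (\ref{Tspectra-cellular.a}), the plan is to first establish that the underlying category $\mathcal M$ of pointed simplicial presheaves equipped with the motivic model structure is cellular, and then to transfer this to symmetric $T$-spectra. For $\mathcal M$ itself, one exhibits explicit generating sets $I$ and $J$ built from the boundary inclusions $\partial \Delta^n_+ \hookrightarrow \Delta^n_+$ smashed with representables $U_+$ for $U\in Sm_X$, and verifies Hirschhorn's three cellularity conditions (compactness of domains and codomains of $I$ relative to $I$, smallness of domains of $J$ relative to $I$, and that cofibrations are effective monomorphisms); cellularity is preserved under left Bousfield localization at a set, so it survives the $\mathbb A^1$- and Nisnevich-local passages from the projective model structure. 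To pass to $\TspectraX$, one uses that the stable model structure is obtained from the level (projective) one by a further left Bousfield localization at a set of stabilizing maps; since the level structure on symmetric $T$-spectra inherits cellularity from $\mathcal M$ via the generating sets $\{F_n(i) : i\in I,\ n\geq 0\}$ and $\{F_n(j) : j\in J,\ n\geq 0\}$, cellularity is again preserved.

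For properness (\ref{Tspectra-celllular.b}), the plan is to reduce both left and right properness on $\TspectraX$ to the corresponding statements on $\mathcal M$. Left properness on $\mathcal M$ is automatic once one knows all objects are cofibrant in the underlying injective-type presentation, or alternatively via stability of weak equivalences under pushouts along cofibrations, which for simplicial presheaves is a standard sectionwise-plus-local argument. Right properness on $\mathcal M$ is the delicate point in the motivic setting: one must show that $\mathbb A^1$-local weak equivalences are stable under pullback along fibrations, and this is achieved through the Morel--Voevodsky construction of the motivic fibrant replacement functor. Once $\mathcal M$ is proper, properness lifts to symmetric $T$-spectra because level fibrations and level weak equivalences assemble pullback diagrams levelwise, and Jardine's stable model structure has a characterization of stable fibrations and stable weak equivalences compatible with this lifting.

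The main obstacle in such a from-scratch argument is right properness on $\TspectraX$: one needs a sufficiently explicit description of stable fibrations between stably fibrant objects in terms of levelwise data so that a pullback of a stable weak equivalence can be analyzed level by level, and then a stability-under-pullback result for $\mathbb A^1$-local equivalences on $\mathcal M$ to close the argument. Fortunately, as indicated in the statement, both parts are already in the literature: cellularity is established by Hovey \cite{MR1860878}, Hornbostel \cite[Cor. 1.6]{MR2197578}, and in \cite[Thm. 2.7.4]{MR2807904}, while properness is proved in Jardine \cite[Thm. 4.15]{MR1787949}. Thus the proof reduces to invoking these references.
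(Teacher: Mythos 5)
Your proposal is correct and matches the paper's treatment: the paper offers no separate argument for this theorem beyond the citations embedded in the statement (Hovey/Hornbostel/\cite[Thm. 2.7.4]{MR2807904} for cellularity, Jardine \cite[Thm. 4.15]{MR1787949} for properness), which is exactly how you conclude. Your preliminary sketch of how cellularity passes from $\mathcal M$ to $\TspectraX$ via left Bousfield localization, and of the role of right properness of $\mathcal M$, is a reasonable outline of what those references actually do, but it is supplementary rather than a divergence in approach.
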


	For details and definitions about Bousfield localization we refer the reader to Hirschhorn's book \cite{MR1944041}.
	Let us just mention the following theorem of Hirschhorn, which guarantees the existence of left and right Bousfield localizations.
	
\begin{thm}[{see \cite[Thms. 4.1.1 and 5.1.1]{MR1944041}}]
		\label{Hirsch-Bousloc}
	Let $\mathcal A$ be a Quillen model category which is cellular and proper.  Let $L$ be a set of maps in $\mathcal A$ and
	let $K$ be a set of objects in $\mathcal A$.
	Then:
	\begin{enumerate}
		\item	The left Bousfield localization of $\mathcal A$ with respect to $L$ exists.
		\item	The right Bousfield localization of $\mathcal A$ with respect to the class of $K$-colocal
				equivalences exists.
	\end{enumerate}
\end{thm}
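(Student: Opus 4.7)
The plan is to realise both localisations as cofibrantly generated Quillen model structures on $\mathcal A$ and to obtain the factorisation axioms by the small object argument. For part (1), I would first declare an object $X\in \mathcal A$ to be \emph{$L$-local} if it is fibrant in $\mathcal A$ and $\mathrm{Map}(B,X)\to \mathrm{Map}(A,X)$ is a weak equivalence of simplicial sets for every $f:A\to B$ in $L$, and a map $g:C\to D$ to be an \emph{$L$-local equivalence} if $\mathrm{Map}(D,X)\to \mathrm{Map}(C,X)$ is a weak equivalence for every $L$-local $X$. The proposed localised structure on $\mathcal A$ has the original cofibrations, the $L$-local equivalences as weak equivalences, and fibrations defined by the right lifting property.

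The technical heart of the proof is to enlarge the original set of generating trivial cofibrations by a set $\Lambda(L)$ of ``horns'' obtained from cofibrant replacements of the maps in $L$ via a pushout-product/mapping-cylinder construction, and then to show that a map of $\mathcal A$ has the right lifting property against $\Lambda(L)$ if and only if it is a fibration of $\mathcal A$ whose underlying map is an $L$-local equivalence. Equivalently, one needs to show that any $L$-local equivalence can be detected by maps of bounded cardinality, so that the small object argument applied to $\Lambda(L)$ terminates and produces the required functorial factorisations. This is the Bousfield--Smith cardinality argument, and it is precisely here that the cellularity hypothesis enters: cellularity supplies the cardinal bounds controlling the sizes of subcomplexes and guarantees that $\Lambda(L)$ is a genuine set. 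Left properness is then invoked to verify that $L$-local equivalences are preserved by pushouts along cofibrations, from which the pushout axiom for the new trivial cofibrations follows.

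For part (2) the argument is dualised: one calls $X$ \emph{$K$-colocal} when its derived mapping space from every $Y\in K$ detects weak equivalences, and a map a \emph{$K$-colocal equivalence} when it induces weak equivalences of these mapping spaces. In the colocalised structure the fibrations are those of $\mathcal A$, the weak equivalences are the $K$-colocal equivalences, and the cofibrations are determined by left lifting. One assembles a set of generators from $K$ together with the original generating cofibrations of $\mathcal A$ and runs the symmetric small object argument; cellularity again provides the cardinal bounds, while right properness plays the role symmetric to that of left properness in part (1).

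The main obstacle in both parts is the cardinality argument: converting the \emph{class} of trivial cofibrations of the localised structure into a cofibrantly generated class by producing a genuine \emph{set} of generators. Without this step the small object argument cannot be invoked and the whole construction collapses. Once $\Lambda(L)$, or its colocalised counterpart, has been produced, the identification of the new fibrant (respectively cofibrant) objects with the $L$-local (respectively $K$-colocal) objects and the verification of the remaining model category axioms is essentially formal.
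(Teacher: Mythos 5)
There is nothing to compare on the paper's side: Theorem \ref{Hirsch-Bousloc} is not proved in the paper at all, it is quoted verbatim from Hirschhorn \cite[Thms. 4.1.1 and 5.1.1]{MR1944041}, and the strategy you outline (keep the cofibrations, take the $L$-local equivalences as weak equivalences, define fibrations by lifting, build horns on $L$, run the Bousfield--Smith cardinality argument with cellularity supplying the cardinal bounds and properness giving the pushout/pullback stability) is exactly the strategy of that cited source. So in outline you are reproducing the standard proof that the paper relies on.

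However, the step you call the technical heart is misstated, and taken literally it would fail. It is not true that a map has the right lifting property against the augmented set of horns $\Lambda(L)$ if and only if it is a fibration of $\mathcal A$ which is an $L$-local equivalence: if that were so, then for every fibrant $L$-local object $X$ the map $X\to \ast$, which is $\Lambda(L)$-injective precisely because that is how local objects are detected, would be an $L$-local equivalence --- absurd. In Hirschhorn's argument the horns play two limited roles: they detect the $L$-local objects (a fibrant $X$ is $L$-local if and only if $X\to\ast$ is $\Lambda(L)$-injective), and, using left properness, relative $\Lambda(L)$-cell complexes are shown to be cofibrations that are $L$-local equivalences. The set $\Lambda(L)$ does \emph{not} generate the trivial cofibrations of the localized structure, and the small object argument is not applied to it to get the (trivial cofibration, fibration) factorization; the issue is not that the small object argument might fail to terminate, but that the $\Lambda(L)$-injectives are not the localized fibrations. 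The cardinality argument instead produces a different set --- the inclusions of cell complexes, of size bounded by a cardinal extracted from the cellularity data, which are $L$-local equivalences --- proves that right lifting against this set already implies right lifting against all cofibrations that are $L$-local equivalences, and only then runs the small object argument on that set. A parallel correction applies to your part (2): there the trivial cofibrations of the colocalized structure coincide with the original ones, so the (trivial cofibration, fibration) factorization is inherited, and the real work is the (cofibration, trivial fibration) factorization, which again needs a boundedness argument together with right properness rather than a ``symmetric small object argument on $K$ plus the generating cofibrations''; indeed the colocalized structure need not be cofibrantly generated at all. With these repairs your sketch becomes Hirschhorn's proof, which is all the paper invokes.
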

	
	Now, we can describe the slice filtration in terms of suitable Bousfield localizations of $\TspectraX$.
	
\begin{thm}[{see \cite{MR2807904}}]
		\label{thm.modelstructures-slicefiltration}
	\begin{enumerate}
		\item	\label{thm.modelstructures-slicefiltration.a}
				Let $R_{C^{q}_{\mathit{eff}}}\TspectraX$ be the
				right Bousfield localization of $\TspectraX$ with respect to the set of 
				objects $C^{q}_{\mathit{eff}}$ (see Eqn. \eqref{eqn.Cqeff}).  Then its
				homotopy category $R_{C^{q}_{\mathit{eff}}}\stablehomotopyX$
				is triangulated and naturally equivalent to
				$\Sigma _{T}^{q}\stablehomotopyX^{\mathit{eff}}$.
				Moreover, the functor $f_{q}$ is canonically isomorphic to the following
				composition of triangulated functors:
					$$\xymatrix{\stablehomotopyX \ar[r]^-{R}& R_{C^{q}_{\mathit{eff}}}\stablehomotopyX \ar[r]^-{C_{q}}&
					 \stablehomotopyX}
					$$
				where $R$ is a fibrant replacement functor in $\TspectraX$, and $C_{q}$ a cofibrant replacement
				functor in $R_{C^{q}_{\mathit{eff}}}\TspectraX$.
		\item	\label{thm.modelstructures-slicefiltration.b}
				Let $\qorthogonalTspectraX$ be the
				left Bousfield localization of $\TspectraX$ with respect to the set of 
				maps 
					\[ \{ F_{n}(S^{r}\wedge \Gm ^{s}\wedge U_{+})\rightarrow \ast | \;
							F_{n}(S^{r}\wedge \Gm ^{s}\wedge U_{+})\in C^{q}_{\mathit{eff}} \}			
					\]
				Then its homotopy category $\qorthogonalstablehomotopyX$
				is triangulated and naturally equivalent to
				$\qorthogonalX$.
				Moreover, the functor $s_{<q}$ is canonically isomorphic to the following
				composition of triangulated functors:
					$$\xymatrix{\stablehomotopyX \ar[r]^-{Q}& L_{<q}\stablehomotopyX \ar[r]^-{W_{q}}&
					 \stablehomotopyX}
					$$
				where $Q$ is a cofibrant replacement functor in $\TspectraX$, and $W_{q}$ a fibrant replacement
				functor in $\qorthogonalTspectraX$.
		\item \label{thm.modelstructures-slicefiltration.c}
				Let $S^{q}\TspectraX$ be the right
				Bousfield localization of $\qplusoneorthogonalTspectraX$ with respect to the set of objects
					\[	\{ F_{n}(S^{r}\wedge \mathbb G _{m}^{s}\wedge U_{+}) 
									\mid n,r,s \geq 0; s-n= q; U\in Sm_{X}\}
					\]
				Then its homotopy category 
				$S^{q}\stablehomotopyX$ is triangulated
				and the identity functor 
					\[	id:R_{C^{q}_{\mathit{eff}}}\TspectraX \rightarrow S^{q}\TspectraX
					\]
				is a left Quillen functor.  Moreover, the functor $s_{q}$ is canonically isomorphic
				to the following composition of triangulated functors:
					$$\xymatrix{\stablehomotopyX \ar[r]^-{R} & R_{C^{q}_{\mathit{eff}}}\stablehomotopyX \ar[r]^-{C_{q}}& 
									S^{q}\stablehomotopyX \ar[r]^-{W_{q+1}}&
									R_{C^{q}_{\mathit{eff}}}\stablehomotopyX \ar[r]^-{C_{q}}& \stablehomotopyX}
					$$
	\end{enumerate}
\end{thm}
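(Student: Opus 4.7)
The plan is to apply Hirschhorn's Bousfield localization machinery (Theorem \ref{Hirsch-Bousloc}) to $\TspectraX$, which is cellular and proper by Theorem \ref{Tspectra-cellular}, and to identify the resulting localized model structures with the pieces of the slice filtration via the universal properties of Bousfield (co)localization and the characterization of $f_{q}$ and $s_{<q}$ via their defining triangles.

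For part \eqref{thm.modelstructures-slicefiltration.a}, the right Bousfield localization $R_{C^{q}_{\mathit{eff}}}\TspectraX$ exists because $C^{q}_{\mathit{eff}}$ is a set. By Hirschhorn's general theory the cofibrant objects are the $C^{q}_{\mathit{eff}}$-colocal objects, and two cofibrant objects are weakly equivalent exactly when they become isomorphic in the smallest triangulated subcategory of $\stablehomotopyX$ containing $C^{q}_{\mathit{eff}}$ and closed under coproducts, which is by definition $\qeffstablehomotopyX$. For an arbitrary $E$, the symmetric $T$-spectrum $C_{q}RE$ is a $C^{q}_{\mathit{eff}}$-colocal object equipped with a natural map $C_{q}RE\to RE$ that is terminal among maps from objects of $\qeffstablehomotopyX$ into $E$; this universal property characterizes $f_{q}E$ (as the counit of the inclusion $i_{q}$ into its right adjoint $r_{q}$ provided by Neeman's results), hence $C_{q}\circ R\cong f_{q}$.

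For part \eqref{thm.modelstructures-slicefiltration.b}, the left Bousfield localization $\qorthogonalTspectraX$ exists by Theorem \ref{Hirsch-Bousloc}(1). A motivically fibrant $E$ is local for $\{F_{n}(S^{r}\wedge \Gm^{s}\wedge U_{+})\to \ast\}$ if and only if the derived mapping space from each $F_{n}(S^{r}\wedge \Gm^{s}\wedge U_{+})\in C^{q}_{\mathit{eff}}$ into $E$ is contractible; passing to connected components and using the $(F_{n}, ev_{n})$-adjunction, this is equivalent to
\[
\Hom_{\stablehomotopyX}(F_{n}(S^{r}\wedge \Gm^{s}\wedge U_{+}),E)=0
\]
for all such tuples, and since $C^{q}_{\mathit{eff}}$ generates $\qeffstablehomotopyX$ as a triangulated category closed under coproducts, this vanishing is precisely $E\in \qorthogonalX$. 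The natural functor $\qorthogonalstablehomotopyX \to \stablehomotopyX$ is therefore fully faithful with essential image $\qorthogonalX$; the map $QE\to W_{q}QE$ fits into a distinguished triangle whose fibre lies in $\qeffstablehomotopyX$, and comparing with the defining triangle $f_{q}E\to E\to s_{<q}E$ identifies $W_{q}\circ Q$ with $s_{<q}$.

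For part \eqref{thm.modelstructures-slicefiltration.c}, a further right Bousfield localization of the cellular, proper model category $\qplusoneorthogonalTspectraX$ with respect to the subset $\{\,s-n=q\,\}\subset C^{q}_{\mathit{eff}}$ exists by Theorem \ref{Hirsch-Bousloc}(2), and the Quillen adjunction $\mathrm{id}:R_{C^{q}_{\mathit{eff}}}\TspectraX \to S^{q}\TspectraX$ is obtained by comparing generating (trivial) cofibrations. The displayed five-term composition sends $E$ successively to $f_{q}E$, to its $(q+1)$-orthogonalization (which kills the image of $f_{q+1}E$), and then to a cofibrant replacement inside $\qeffstablehomotopyX$; combined with the triangle $f_{q+1}E\to f_{q}E\to s_{q}E$ and the characterization $s_{q}E\in \qeffstablehomotopyX\cap \qplusoneorthogonalX$, this yields a canonical isomorphism with $s_{q}$. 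The main technical obstacle throughout is ensuring that the iterated (co)fibrant replacements assemble into \emph{triangulated} functors agreeing on the generators of $\qeffstablehomotopyX$, which in turn reduces to checking that colocal (respectively local) equivalences between (co)fibrant objects are detected on mapping spaces involving the chosen generating set.
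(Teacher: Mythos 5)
Your outline is not comparable to an argument in the paper itself: the paper does not prove this theorem, it quotes it from \cite{MR2807904} (Thms.~3.3.9, 3.3.25, 3.3.50, 3.3.68 for parts (1) and (3); Prop.~3.2.27(3), Thm.~3.3.26, Prop.~3.3.30 and Thm.~3.3.45 for part (2)). Your strategy --- Hirschhorn localization of the cellular, proper category $\TspectraX$ plus identification of $f_{q}$, $s_{<q}$, $s_{q}$ through the universal properties of the (co)localizations and the defining triangles --- is the same strategy carried out in that reference, so the overall plan is sound. But judged as a proof rather than a plan, several of the steps you assert are precisely the nontrivial content of the cited theorems, and one of them would fail as written.

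The step that fails as written is in part (3): you invoke Theorem \ref{Hirsch-Bousloc}(2) for $\qplusoneorthogonalTspectraX$ on the grounds that it is ``cellular and proper''. Left Bousfield localization preserves cellularity and \emph{left} properness, but it does not preserve \emph{right} properness in general, and right properness is a hypothesis of Hirschhorn's existence theorem for right Bousfield localization; establishing that $\qplusoneorthogonalTspectraX$ is right proper (or otherwise constructing $S^{q}\TspectraX$) is a genuine piece of work done in \cite{MR2807904} and absent from your sketch. Similarly, the claim that $id:R_{C^{q}_{\mathit{eff}}}\TspectraX\rightarrow S^{q}\TspectraX$ is left Quillen cannot be settled ``by comparing generating (trivial) cofibrations'', since right Bousfield localizations in this framework do not come with generating sets; one has to compare colocal equivalences and cofibrations directly. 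In parts (1) and (2) the remaining assertions are correct but are exactly the points needing proof: that the $C^{q}_{\mathit{eff}}$-colocal objects agree up to isomorphism in $\stablehomotopyX$ with the objects of $\qeffstablehomotopyX$ (this uses compactness of the generators, since colocal objects are a priori only closed under homotopy colimits, not under triangles and $S^{1}$-desuspension); that the homotopy categories of the right localizations are triangulated; and, in (2), that the fibre of $QE\rightarrow W_{q}QE$ lies in $\qeffstablehomotopyX$ (equivalently, that $\pi^{E}_{<q}:E\rightarrow s_{<q}E$ is a local equivalence into a local object). Each of these is assumed rather than argued, so as it stands the proposal is an accurate roadmap to the proof in \cite{MR2807904} rather than a proof.
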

\begin{proof}
	(\ref{thm.modelstructures-slicefiltration.a}) and (\ref{thm.modelstructures-slicefiltration.c})  
	follow directly from \cite[Thms. 3.3.9, 3.3.25, 3.3.50, 3.3.68]{MR2807904}.
	On the other hand, (\ref{thm.modelstructures-slicefiltration.b}) follows from
	proposition 3.2.27(3) together with theorem 3.3.26; proposition 3.3.30 and
	theorem 3.3.45 in \cite{MR2807904}
\end{proof}

\end{section}
\begin{section}{Birational and Weakly Birational Cohomology Theories}
		\label{Introd2}
		
	In this section, we construct the birational and weakly birational motivic stable homotopy categories.
	These are defined as left Bousfield localizations of $\TspectraX$ with respect to maps which are induced
	by open immersions with a numerical condition in the codimension of the closed complement (which is
	assumed to be smooth in the weakly birational case).  The existence of the left Bousfield localizations
	considered in this section follows immediately from theorems \ref{Tspectra-cellular} and \ref{Hirsch-Bousloc}.
	
\begin{lem}
		\label{lemma.relations-generators}
	Let $a,a',b,b',p,p'\geq 0$ be integers such that
	$a-p=a'-p'$ and $b-p=b'-p'$.  Assume that
	$p\geq p'$, then for every $Y\in Sm_{X}$, there is a
	weak equivalence in $\TspectraX$, which is
	natural with respect to $Y$
		\[	g^{a,b}_{p,p'}(Y):F_{p}(S^{a}\wedge 
				\Gm ^{b} \wedge
				Y_{+}) \rightarrow
			F_{p'}(S^{a'}\wedge \Gm ^{b'}\wedge
				Y_{+})
		\]
\end{lem}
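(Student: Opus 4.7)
Writing $d := p - p' \geq 0$, the hypotheses $a - p = a' - p'$ and $b - p = b' - p'$ force $a = a' + d$ and $b = b' + d$. So in $\mathcal M$ one has a canonical shuffle isomorphism
$$S^{a}\wedge \Gm^{b}\wedge Y_{+} \;\cong\; T^{d}\wedge S^{a'}\wedge \Gm^{b'}\wedge Y_{+},$$
obtained by permuting $d$ copies of $\Gm$ past $S^{a'}$ and regrouping using $T=S^{1}\wedge \Gm$. Setting $A:=S^{a'}\wedge \Gm^{b'}\wedge Y_{+}$, the task reduces to producing, naturally in $A$, a stable weak equivalence $F_{p}(T^{d}\wedge A)\to F_{p'}(A)$.

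My plan is to build this map as a composite of $d$ copies of the elementary natural transformation
$$\eta_{n,B}:F_{n+1}(T\wedge B)\longrightarrow F_{n}(B),$$
where $\eta_{n,B}$ is the adjoint, under the Quillen adjunction $F_{n+1}\dashv ev_{n+1}$, of the canonical inclusion $T\wedge B \hookrightarrow (F_{n}B)_{n+1}$ indexed by the identity of $\Sigma_{n+1}$. The crucial input is that each $\eta_{n,B}$ is a stable motivic weak equivalence in Jardine's model structure on $\TspectraX$ (see \cite{MR1787949}); this is essentially built into the definition of the stable model structure, since these are precisely the $T$-suspension/loop comparison maps that get inverted in stabilization.

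I would then define $g^{a,b}_{p,p'}(Y)$ as the composite of the shuffle identification above with the $d$-fold composition
$$F_{p}(T^{d}\wedge A)\xrightarrow{\eta_{p-1,\,T^{d-1}\wedge A}} F_{p-1}(T^{d-1}\wedge A)\longrightarrow \cdots \longrightarrow F_{p'+1}(T\wedge A)\xrightarrow{\eta_{p',A}} F_{p'}(A).$$
The two-out-of-three property, applied inductively, shows that the composite is a weak equivalence, and naturality in $Y$ is immediate since each factor is natural in its pointed-presheaf argument and $Y\mapsto Y_{+}$ is functorial.

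The main obstacle I anticipate is bookkeeping rather than conceptual: in the symmetric-spectrum formalism, $F_{n}(B)_{m}$ is a twisted $\Sigma_{m}$-object, and one must verify that the symmetric group shuffles implicit in identifying $T^{d}$ as an iterated smash (and in telescoping $F_{p}(T\wedge T^{d-1}\wedge A)$ through the chain of $\eta_{\bullet}$'s) are compatible. Once a convention is fixed so that the iterated composite of the $\eta_{n}$'s is well defined, the assertion that each factor is a stable equivalence makes the remainder formal.
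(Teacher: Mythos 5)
Your proposal is correct and is essentially the paper's own argument: the paper defines $g^{a,b}_{p,p'}(Y)$ in one step as the map adjoint, under the adjunction $(F_{p},ev_{p},\varphi)$, to the identity of $S^{a}\wedge \Gm^{b}\wedge Y_{+}$ (using the same identification $ev_{p}F_{p'}(S^{a'}\wedge\Gm^{b'}\wedge Y_{+})\supseteq T^{p-p'}\wedge S^{a'}\wedge\Gm^{b'}\wedge Y_{+}\cong S^{a}\wedge\Gm^{b}\wedge Y_{+}$ you use), and then cites \cite[Prop.~2.4.26]{MR2807904} for the weak-equivalence claim, which is precisely the stabilization fact you invoke for the elementary maps $F_{n+1}(T\wedge B)\to F_{n}(B)$ (in Hovey's description of the stable structure these are exactly the maps inverted by localization, so your ``built into the definition'' remark is justified). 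Your factorization into $d$ such maps followed by the two-out-of-three property, together with the shuffle identification and the evident naturality in $Y$, produces the same natural weak equivalence, so the argument goes through.
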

\begin{proof}
	We have the following adjunction
	(see \cite[Def. 2.6.8]{MR2807904})
		\[	(F_{p},ev_{p},\varphi ):\mathcal M
			\rightarrow \TspectraX
		\]
	Using this adjunction, we define $g^{a,b}_{p,p'}(Y)$
	as adjoint to the identity map:
		\begin{align*}
			S^{a}\wedge \Gm ^{b}\wedge Y_{+}
				\xrightarrow{id}
			ev_{p}(F_{p'}(S^{a'}\wedge 
				\Gm ^{b'}\wedge Y_{+})) & \cong
				S^{p-p'}\wedge \Gm ^{p-p'}
			\wedge S^{a'}\wedge \Gm ^{b'}\wedge
			Y_{+} \\ & \cong
			S^{a}\wedge \Gm ^{b}\wedge Y_{+}
		\end{align*}
	Thus, it is clear that $g^{a,b}_{p,p'}(Y)$ is
	natural in $Y$, and it follows from
	\cite[Prop. 2.4.26]{MR2807904}
	that it is a weak equivalence in $\TspectraX$.
\end{proof}	
		
\begin{defi}[{see \cite[section 7.5]{MR0338129}}]
		\label{def.codimension}
	Let $Y\in Sch_{X}$, and $Z$ a closed subscheme of $Y$.  The \emph{codimension}
	of $Z$ in $Y$, $codim_{Y}Z$ is the infimum (over the generic points $z_{i}$ of $Z$)
	of the dimensions of the local rings $\mathcal O _{Y, z_{i}}$. 
\end{defi}

	Since $X$ is Noetherian of finite Krull dimension and $Y$ is of finite type over $X$, $codim_{Y}Z$ is always finite.
		
\begin{defi}
		\label{def.localizing-maps}
	We fix an arbitrary integer  $n\geq 0$, and consider the following set of 
	open immersions which have a closed complement of codimension at least $n+1$
\begin{align*}
	B_{n}=\{ \iota _{U,Y}:& U\rightarrow Y \text{ open immersion } |\\														
											 & Y\in Sm_{X}; Y \text{ irreducible};
									     (codim_{Y}Y\backslash U)\geq n+1
	\}
\end{align*}
	The letter $B$ stands for birational.	
\end{defi}

	Now we consider the left Bousfield localization of $\TspectraX$ with respect to a suitable set of
	maps induced by the families of open immersions $B_{n}$ described above.
	
\begin{defi}
		\label{def.localizationAmod-Birat}
	Let $n\in \mathbb Z$ be an arbitrary integer.
	\begin{enumerate}
		\item \label{def.localizationAmod-Birat.a}  Let $\nbiratTspectraX$ denote the
				left Bousfield localization of $\TspectraX$ 
				with respect to the set of maps 
					\[	sB_{n}=\{ F_{p}(\Gm ^{b}\wedge \iota _{U,Y}): b, p, r\geq 0, 
							b-p\geq n-r; \iota _{U,Y}\in B_{r} \}.
					\]
		\item	\label{def.localizationAmod-Birat.b}  Let $b^{(n)}$ denote its fibrant replacement functor 
					and $\stablenbiratTspectraX$ its associated homotopy category.
	\end{enumerate}
	For $n\neq 0$ we will call $\stablenbiratTspectraX$ the \emph{codimension $n+1$-birational motivic stable homotopy
	category,} and for $n=0$ we will call it the \emph{birational motivic stable homotopy category}.
\end{defi}

\begin{lem}
		\label{lemma.generators.becomeequiv.1}
	Let $n\in \mathbb Z$ be an arbitrary integer.  Then for every $a\geq 0$, the maps
		\[	S^{a}\wedge sB_{n}=\{ F_{p}(S^{a}\wedge \Gm ^{b}\wedge 
							\iota _{U,Y}): 
							b, p, r\geq 0, b-p \geq n-r; \iota _{U,Y}\in B_{r} \}
		\]
	are weak equivalences in $\nbiratTspectraX$.
\end{lem}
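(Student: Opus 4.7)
The plan is to verify, for each map in $S^{a}\wedge sB_{n}$, the defining condition of an $sB_{n}$-local equivalence, using the $(F_{p},ev_{p})$ adjunction of \cite[Def. 2.6.8]{MR2807904} together with the loop--suspension adjunction in $\mathcal M$ to reduce the statement to the fact that the generator $F_{p}(\Gm^{b}\wedge \iota_{U,Y})$ already lies in $sB_{n}$.

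I would begin by recalling the standard characterization of weak equivalences in a left Bousfield localization (see \cite[Ch. 3]{MR1944041}): a map $f$ in $\TspectraX$ is a weak equivalence in $\nbiratTspectraX$ if and only if, for every fibrant $sB_{n}$-local $Z$, the induced map $f^{*}$ of derived simplicial mapping spaces into $Z$ is a weak equivalence of simplicial sets. Fix $a\geq 0$, a triple $(b,p,r)$ with $b,p,r\geq 0$ and $b-p\geq n-r$, an $\iota_{U,Y}\in B_{r}$, and a fibrant $sB_{n}$-local $Z$. Applying $(F_{p},ev_{p})$ followed by the loop--suspension adjunction in $\mathcal M$ provides, for every $K\in \mathcal M$, a natural isomorphism
\[
\mathrm{Map}_{\TspectraX}\bigl(F_{p}(S^{a}\wedge \Gm^{b}\wedge K),Z\bigr)\;\cong\;\Omega^{a}\,\mathrm{Map}_{\mathcal M}\bigl(\Gm^{b}\wedge K,Z_{p}\bigr),
\]
where $Z_{p}=ev_{p}(Z)$. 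Taking $K=U_{+}$ and $K=Y_{+}$ and using naturality in $K$, the map $F_{p}(S^{a}\wedge \Gm^{b}\wedge \iota_{U,Y})^{*}$ is identified with $\Omega^{a}$ applied to the map $\mathrm{Map}_{\mathcal M}(\Gm^{b}\wedge Y_{+},Z_{p})\to \mathrm{Map}_{\mathcal M}(\Gm^{b}\wedge U_{+},Z_{p})$ induced by $\iota_{U,Y}$.

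Unwinding this last map through the $(F_{p},ev_{p})$ adjunction in reverse shows that it is nothing but the one induced by the generator $F_{p}(\Gm^{b}\wedge \iota_{U,Y})\in sB_{n}$; since $Z$ is $sB_{n}$-local, it is a weak equivalence of simplicial sets. Because $Z$ is fibrant in $\TspectraX$ and $ev_{p}$ is right Quillen, $Z_{p}$ is fibrant in $\mathcal M$, and since $\Gm^{b}\wedge K$ is cofibrant, both mapping spaces $\mathrm{Map}_{\mathcal M}(\Gm^{b}\wedge K,Z_{p})$ are Kan fibrant pointed simplicial sets; the functor $\Omega^{a}$ preserves weak equivalences between such, so $F_{p}(S^{a}\wedge \Gm^{b}\wedge \iota_{U,Y})^{*}$ is a weak equivalence of simplicial sets, as required. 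The main point requiring attention is this last fibrancy verification—confirming that after applying $ev_{p}$ and the mapping-space functor we land in the Kan fibrant setting where $\Omega^{a}$ preserves weak equivalences—but once this is in place the argument is purely formal and sidesteps any case analysis on the relative sizes of $a$, $b$, and $p$.
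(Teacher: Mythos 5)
Your argument is correct, but it takes a genuinely different route from the paper. The paper's proof stays entirely inside the localized model category: it notes that $F_{p}(\Gm^{b}\wedge U_{+})$ and $F_{p}(\Gm^{b}\wedge Y_{+})$ are cofibrant in $\TspectraX$, hence in $\nbiratTspectraX$, that the generator $F_{p}(\Gm^{b}\wedge\iota_{U,Y})$ is by construction a weak equivalence there, and that $\nbiratTspectraX$ is a simplicial model category by Hirschhorn's theorem; Ken Brown's lemma applied to the left Quillen functor $S^{a}\wedge-$ then finishes in one stroke, using $F_{p}(S^{a}\wedge\Gm^{b}\wedge\iota_{U,Y})\cong S^{a}\wedge F_{p}(\Gm^{b}\wedge\iota_{U,Y})$. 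You instead test against fibrant $sB_{n}$-local objects $Z$, use the simplicial $(F_{p},ev_{p})$ adjunction and the tensor--cotensor adjunction to rewrite $\mathrm{Map}_{\TspectraX}(F_{p}(S^{a}\wedge\Gm^{b}\wedge K),Z)$ as $\Omega^{a}\mathrm{Map}_{\mathcal M}(\Gm^{b}\wedge K,Z_{p})$, identify the relevant map with the one induced by the generator, and invoke locality of $Z$ plus the fact that $\Omega^{a}$ preserves weak equivalences of Kan complexes. This only uses the simplicial structure of the unlocalized categories together with Hirschhorn's characterization of $sB_{n}$-local equivalences, and it is the same mapping-space technique the paper later deploys (in the harder, converse direction) in the proof of Proposition \ref{prop.charac.fibrants1}; the paper's route is shorter but leans on simpliciality of the localized structure as a black box. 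One small point you should make explicit: for the simplicial mapping spaces to compute the homotopy function complexes appearing in the definition of a local equivalence, you also need $F_{p}(S^{a}\wedge\Gm^{b}\wedge K)$ and $F_{p}(\Gm^{b}\wedge K)$ to be cofibrant in $\TspectraX$; this is immediate since every object of $\mathcal M$ is cofibrant and $F_{p}$ is left Quillen (the paper cites this cofibrancy explicitly), so it is an omission of a sentence rather than a gap.
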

\begin{proof}
	Let $F_{p}(\Gm ^{b}\wedge \iota _{U,Y})\in sB_{n}$ with $\iota _{U,Y}
	\in B_{r}$.  Both $F_{p}(\Gm ^{b}\wedge U_{+})$ and $F_{p}(\Gm ^{b}
	\wedge Y_{+})$ are cofibrant in $\TspectraX$ (see
	\cite[Props. 2.4.17, 2.6.18 and Thm. 2.6.30]{MR2807904})
	and hence also in $\nbiratTspectraX$.   By construction,
	$F_{p}(\Gm ^{b}\wedge \iota _{U,Y})$ is a weak equivalence  in 
	$\nbiratTspectraX$; and \cite[Thm. 4.1.1.(4)]{MR1944041} implies
	that $\nbiratTspectraX$ is a simplicial model category.
	Thus, it follows from
	Ken Brown's lemma (see \cite[lemma 1.1.12]{MR1650134})
	that $F_{p}(S^{a}\wedge \Gm ^{b}\wedge \iota _{U,Y})$ is also a weak
	 equivalence in $\nbiratTspectraX$ for every $a\geq 0$.
\end{proof}

\begin{prop}
		\label{prop.charac.fibrants1}
	Let $E$ be an arbitrary symmetric $T$-spectrum.  Then
	$E$ is fibrant in $\nbiratTspectraX$ if and only if the following conditions hold:
		\begin{enumerate}
			\item	\label{prop.charac.fibrants1.a}  $E$ is fibrant in $\TspectraX$.
			\item	\label{prop.charac.fibrants1.b}		For every $a, b, p, r\geq 0$
							 such that $b-p\geq n-r$; and every $\iota _{U,Y} \in B_{r}$,
							 the induced map
								\[ \xymatrix{\Hom _{\stablehomotopyX}(F_{p}(S^{a}\wedge
										 \Gm ^{b} \wedge Y_{+}), E) \ar[r]_-{\cong}^-{\iota _{U,Y}
										 ^{\ast}} &
										\Hom _{\stablehomotopyX}(F_{p}(S^{a}\wedge \Gm ^{b}
										\wedge U_{+}), E)}
								\]
							is an isomorphism.
		\end{enumerate}
\end{prop}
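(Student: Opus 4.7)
The plan is to apply Hirschhorn's general characterization of fibrant objects in a left Bousfield localization and then translate the resulting simplicial-enrichment condition into the Hom-set condition (b) of the proposition, so both directions of the iff come out at once.

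First, by \cite[Thm. 4.1.1]{MR1944041}, an object $E$ is fibrant in $\nbiratTspectraX$ if and only if it is fibrant in $\TspectraX$ and $sB_{n}$-local, i.e.\ for every $f=F_{p}(\Gm ^{b}\wedge \iota _{U,Y})$ in $sB_{n}$, the induced map of pointed simplicial mapping spaces $\mathrm{Map}(F_{p}(\Gm ^{b}\wedge Y_{+}),E)\to \mathrm{Map}(F_{p}(\Gm ^{b}\wedge U_{+}),E)$ is a weak equivalence. Both $F_{p}(\Gm ^{b}\wedge U_{+})$ and $F_{p}(\Gm ^{b}\wedge Y_{+})$ are cofibrant in $\TspectraX$ (as already noted in the proof of lemma \ref{lemma.generators.becomeequiv.1}), so no cofibrant replacement of the source or target is needed and these are honest derived mapping spaces into the fibrant object $E$. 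This already matches condition (a) of the proposition and reduces the rest to analysing the $sB_{n}$-locality condition.

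Second, I would convert this weak-equivalence condition into the homotopy-category statement appearing in (b). A map of pointed Kan complexes is a weak equivalence iff it induces isomorphisms on $\pi _{a}$ for every $a\geq 0$. For any $A$ cofibrant and $E$ fibrant in $\TspectraX$, the simplicial enrichment of Jardine's model structure yields a natural isomorphism
$$\pi _{a}\,\mathrm{Map}(A,E)\;\cong\;\Hom _{\stablehomotopyX}(S^{a}\wedge A,E),$$
and the simplicial tensor commutes with the left adjoints $F_{p}$, giving $S^{a}\wedge F_{p}(\Gm ^{b}\wedge A)\cong F_{p}(S^{a}\wedge \Gm ^{b}\wedge A)$. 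Applying these two identifications to the map induced by $\iota _{U,Y}$ on $\pi _{a}$ produces precisely the arrow displayed in (b). Thus the $sB_{n}$-locality of $E$ is equivalent to condition (b) holding for every $a\geq 0$ together with all choices of $b,p,r$ and $\iota _{U,Y}\in B_{r}$ allowed by the definition of $sB_{n}$, which completes both implications.

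The argument is essentially formal. The only non-trivial ingredient is the identification of $\pi _{a}$ of a simplicial mapping space with a $\Hom$-set in $\stablehomotopyX$, which is a standard feature of Jardine's model structure on symmetric $T$-spectra; everything else is bookkeeping around Hirschhorn's theorem. As a sanity check, lemma \ref{lemma.generators.becomeequiv.1} says directly that the $S^{a}$-suspended maps in $sB_{n}$ become weak equivalences in $\nbiratTspectraX$, which is exactly the model-categorical counterpart of the Hom-set isomorphism appearing in (b).
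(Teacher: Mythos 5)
Your overall framing---Hirschhorn's characterization of fibrant objects in a left Bousfield localization together with the identification $\pi _{a}\,\mathrm{Map}_{\ast}(A,E)\cong \Hom _{\stablehomotopyX}(S^{a}\wedge A,E)$ for $A$ cofibrant and $E$ fibrant---is the same as the paper's, and it does dispose of the forward implication. The gap is in the converse, at the step ``a map of pointed Kan complexes is a weak equivalence iff it induces isomorphisms on $\pi _{a}$ for every $a\geq 0$.'' That criterion is false without a connectivity (or grouplike $H$-space) hypothesis: the isomorphism $\pi _{a}\,\mathrm{Map}_{\ast}(-,E)\cong \Hom _{\stablehomotopyX}(S^{a}\wedge -,E)$ computes homotopy groups only at the canonical base point (the zero map), whereas a weak equivalence of simplicial sets requires isomorphisms of homotopy groups at every vertex. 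The mapping spaces $\mathrm{Map}_{\ast}(F_{p}(\Gm ^{b}\wedge Y_{+}),E)$ are in general not connected---their $\pi _{0}$ is $\Hom _{\stablehomotopyX}(F_{p}(\Gm ^{b}\wedge Y_{+}),E)$, which has no reason to vanish---so from hypothesis (\ref{prop.charac.fibrants1.b}) you may only conclude that $\iota _{U,Y}^{\ast}$ restricts to a weak equivalence on the path component of the base point, not that it is a weak equivalence of the whole mapping spaces. This missing step is precisely the non-formal content of the proposition.

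The paper's proof handles exactly this point: it first obtains the base-point-component statement for the shifted generators $F_{p+1}(\Gm ^{b+1}\wedge \iota _{U,Y})$, then applies $\mathrm{Map}_{\ast}(S^{1},-)$, which kills the path components away from the base point, yielding a genuine weak equivalence $\mathrm{Map}_{\ast}(F_{p+1}(S^{1}\wedge \Gm ^{b+1}\wedge Y_{+}),E)\rightarrow \mathrm{Map}_{\ast}(F_{p+1}(S^{1}\wedge \Gm ^{b+1}\wedge U_{+}),E)$, and finally uses the natural weak equivalences $g^{1,b+1}_{p+1,p}$ of lemma \ref{lemma.relations-generators} and the two-out-of-three property to descend back to $F_{p}(\Gm ^{b}\wedge \iota _{U,Y})$. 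Your argument could instead be repaired by observing that, $\TspectraX$ being a stable simplicial model category, these mapping spaces are grouplike $H$-spaces (loop spaces), so all components are equivalent and base-point homotopy groups do detect weak equivalences; but some such argument must be supplied explicitly---as written, the reduction from condition (\ref{prop.charac.fibrants1.b}) to $sB_{n}$-locality is unjustified.
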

\begin{proof}
	($\Rightarrow$): Since the identity functor
		\[	id: \TspectraX \rightarrow \nbiratTspectraX
		\]
	is a left Quillen functor, the conclusion follows from the derived
	adjunction 
		\[		(Q, b^{(n)},\varphi ):\stablehomotopyX \rightarrow
					 \stablenbiratTspectraX
		\]
	together with lemma \ref{lemma.generators.becomeequiv.1}.
	
	($\Leftarrow$):	Assume that $E$ satisfies (\ref{prop.charac.fibrants1.a})
		and (\ref{prop.charac.fibrants1.b}).  Let $\omega _{0}$,
		$\eta _{0}$ denote the base points of the pointed
		simplicial sets $\mathrm{Map}_{\ast}(F_{p}(\Gm
		^{b}\wedge Y_{+}),E)$ and $\mathrm{Map}_{\ast}(F_{p}(\Gm ^{b}
		\wedge U_{+}),E)$ respectively.  Since
		$F_{p}(\Gm ^{b}\wedge Y_{+})$ and
		$F_{p}(\Gm ^{b}\wedge U_{+})$ are always cofibrant,
		by \cite[Def. 3.1.4(1)(a) and Thm. 4.1.1(2)]{MR1944041}
		it is enough to show that every map in $sB_{n}$ induces
		a weak equivalence of simplicial sets:
			\[	\xymatrix{ \mathrm{Map} _{\ast}(F_{p}(\Gm ^{b}
			\wedge Y_{+}),E) \ar[r]^-{\iota _{U,Y}^{\ast}}& 
				\mathrm{Map} _{\ast}(F_{p}(\Gm ^{b}
				\wedge U_{+}),E)}
			\]
		
		Since $\TspectraX$ is a pointed simplicial model category,
		we observe that lemma 6.1.2 in \cite{MR1650134} and remark
		2.4.3(2) in \cite{MR2807904} imply that the following diagram
		is commutative for $a\geq 0$ and all the vertical arrows
		are isomorphisms
			\[	\xymatrix{\pi _{a,\omega _{0}}\mathrm{Map} _{\ast}(F_{p}(\Gm ^{b} 
			\wedge Y_{+}),E) \ar[dr]_-{\iota _{U,Y}^{\ast}} \ar[dd]_-{\cong}& \\
			& \pi _{a,\eta _{0}}\mathrm{Map} _{\ast}(F_{p}(\Gm ^{b} 
			\wedge U_{+}),E) \ar[dd]^-{\cong} \\
			\Hom _{\stablehomotopyX}(F_{p}(S^{a}\wedge
			\Gm ^{b} \wedge Y_{+}),E) \ar[dr]^-{\iota _{U,Y}^{\ast}} & \\
			& \Hom _{\stablehomotopyX}(F_{p}(S^{a}\wedge
			\Gm ^{b} \wedge U_{+}),E)}
			\]
		by hypothesis, the bottom row is an isomorphism, hence
		the top row is also an isomorphism.  This implies that 
		for every map in $sB_{n}$, the induced map
			\[	\xymatrix{ \mathrm{Map} _{\ast}(F_{p}(\Gm ^{b}
			\wedge Y_{+}),E) \ar[r]^-{\iota _{U,Y}^{\ast}}& 
				\mathrm{Map} _{\ast}(F_{p}(\Gm ^{b}
				\wedge U_{+}),E)}
			\]
		is a weak equivalence when it is restricted to the path component
		of $\mathrm{Map}_{\ast}(F_{p}(\Gm ^{b}\wedge Y_{+}),E)$
		containing $\omega _{0}$.  This holds in particular for
			\[		\xymatrix{ \mathrm{Map} _{\ast}(F_{p+1}(\Gm ^{b+1}
			\wedge Y_{+}),E) \ar[r]^-{\iota _{U,Y}^{\ast}}& 
				\mathrm{Map} _{\ast}(F_{p+1}(\Gm ^{b+1}
				\wedge U_{+}),E)}
			\]
		Therefore, the following map is a weak equivalence of
		pointed simplicial sets, since taking $S^{1}$-loops kills the path
		components that do not contain the base point
			\[		\xymatrix{ \mathrm{Map}_{\ast}(S^{1},
						\mathrm{Map} _{\ast}(F_{p+1}(\Gm ^{b+1}
						\wedge Y_{+}),E))\ar[d] \\
						 \mathrm{Map}_{\ast}(S^{1}, 
						\mathrm{Map} _{\ast}(F_{p+1}(\Gm ^{b+1}
						\wedge U_{+}),E))}
			\]
		
		Now, since $\TspectraX$ is a simplicial model category
		we deduce that the rows in the following commutative diagram
		are isomorphisms
			\[	\xymatrix@C=1pc{\mathrm{Map}_{\ast}(S^{1},
						\mathrm{Map} _{\ast}(F_{p+1}(\Gm ^{b+1}
						\wedge Y_{+}),E)) \ar[dr]_-{\cong} 
						\ar[dd]_-{\iota _{U,Y}^{\ast}}& \\
			& \mathrm{Map} _{\ast}(F_{p+1}(S^{1}\wedge \Gm ^{b+1}
			\wedge Y_{+}),E) \ar[dd]^-{\iota _{U,Y}^{\ast}} \\
			\mathrm{Map}_{\ast}(S^{1},
						\mathrm{Map} _{\ast}(F_{p+1}(\Gm ^{b+1}
						\wedge U_{+}),E)) \ar[dr]_-{\cong} & \\
			& \mathrm{Map} _{\ast}(F_{p+1}(S^{1}\wedge \Gm ^{b+1}
			\wedge U_{+}),E)}
			\]
		Thus, by the three out of two property for weak equivalences, we
		conclude that
			\[		\xymatrix{ \mathrm{Map} _{\ast}(F_{p+1}(S^{1}\wedge 
						\Gm ^{b+1}
						\wedge Y_{+}),E) \ar[r]^-{\iota _{U,Y}^{\ast}}& 
						\mathrm{Map} _{\ast}(F_{p+1}(S^{1}\wedge \Gm ^{b+1}
						\wedge U_{+}),E)}
			\]
		is also a weak equivalence of pointed simplicial sets.  Finally, lemma \ref{lemma.relations-generators}
implies that the following diagram is commutative and
the vertical arrows are weak equivalences in 
$\TspectraX$
			\[  \xymatrix{ \mathrm{Map} _{\ast}
			        (F_{p+1}(S^{1}\wedge \Gm ^{b+1}
				\wedge Y_{+}),E) \ar[r]^-{\iota _{U,Y}
				^{\ast}} &
				 \mathrm{Map} _{\ast}(
				F_{p+1}(S^{1}\wedge \Gm ^{b+1}
				\wedge U_{+}),E) \\
				\mathrm{Map} _{\ast}(F_{p}(\Gm 
				^{b}\wedge Y_{+}),E) \ar[r]^-{\iota 
				_{U,Y}^{\ast}} \ar[u]^-{g^{1,b+1}_{p+1,p}(Y)^{\ast}} & 
				\mathrm{Map} 
				_{\ast}(F_{p}(\Gm ^{b}\wedge 
				U_{+}),E) \ar[u]_-{g^{1,b+1}_{p+1,p}(U)^{\ast}}}
			\]
	Thus, we conclude by the  two out of three property
	for weak equivalences that the bottom arrow
	is also a weak equivalence in $\TspectraX$.
\end{proof}

\begin{prop}
		\label{prop.properties-homotbirat}
	The homotopy category $\stablenbiratTspectraX$ is a compactly generated triangulated category
	 in the sense of Neeman \cite[Def. 1.7]{MR1308405}.
\end{prop}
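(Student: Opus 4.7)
The plan is to verify the three defining properties of compact generation in Neeman's sense: that $\stablenbiratTspectraX$ is triangulated, has all small coproducts, and admits a set of compact generators. The first two I would obtain by showing that $\nbiratTspectraX$ is a stable simplicial model category. Since $\TspectraX$ is already stable, stability of the localization follows once one checks that the set $sB_{n}$ is closed, up to weak equivalence in $\nbiratTspectraX$, under smashing with $T=S^{1}\wedge \Gm$. A typical element $F_{p}(\Gm^{b}\wedge \iota_{U,Y})\in sB_{n}$ (with $\iota_{U,Y}\in B_{r}$ and $b-p\geq n-r$) is sent by $\Gm\wedge(-)$ to $F_{p}(\Gm^{b+1}\wedge \iota_{U,Y})$, still in $sB_{n}$ since $(b+1)-p\geq n-r$, and by $S^{1}\wedge(-)$ to an element of $S^{1}\wedge sB_{n}$, which is already a weak equivalence in $\nbiratTspectraX$ by lemma \ref{lemma.generators.becomeequiv.1}. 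Small coproducts on $\stablenbiratTspectraX$ are then inherited from $\stablehomotopyX$ via the identity left Quillen functor $\TspectraX\to\nbiratTspectraX$.

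The candidate set of compact generators is
\[
\mathcal{G}=\{F_{p}(S^{r}\wedge \Gm^{s}\wedge U_{+})\mid p,r,s\geq 0,\ U\in Sm_{X}\},
\]
which is a compact generating set for $\stablehomotopyX$. To show that its image generates $\stablenbiratTspectraX$, represent an arbitrary object $X\in\stablenbiratTspectraX$ by a fibrant $E$ in $\nbiratTspectraX$; by proposition \ref{prop.charac.fibrants1}, $E$ is also fibrant in $\TspectraX$, and the derived adjunction associated with the identity left Quillen functor $\TspectraX\to\nbiratTspectraX$ identifies $\Hom_{\stablenbiratTspectraX}(G[k],E)$ with $\Hom_{\stablehomotopyX}(G[k],E)$ for each cofibrant $G\in\mathcal{G}$. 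Hence vanishing of $\Hom_{\stablenbiratTspectraX}(G[k],X)$ for all $G$ and $k$ forces $E\simeq\ast$ in $\stablehomotopyX$, and therefore in $\stablenbiratTspectraX$.

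The hard step is compactness of the objects in $\mathcal{G}$ inside $\stablenbiratTspectraX$, since coproducts there are computed by applying the fibrant replacement $b^{(n)}$ to coproducts taken in $\stablehomotopyX$, and $b^{(n)}$ need not preserve coproducts in general. The key observation is that every element of $sB_{n}$ is a map between compact objects of $\stablehomotopyX$: indeed $U$ and $Y$ are of finite type over the noetherian base $X$, so $U_{+}$ and $Y_{+}$ are compact in $\mathcal{M}$, and smashing with $\Gm^{b}$ followed by the left adjoint $F_{p}$ preserves compactness. It follows that the class of $sB_{n}$-local objects in $\stablehomotopyX$ is closed under arbitrary coproducts: for any $s:A\to B$ in $sB_{n}$, compactness of $A$ and $B$ exhibits
\[
s^{\ast}:\Hom_{\stablehomotopyX}(B,\coprod_{\alpha}E_{\alpha})\to \Hom_{\stablehomotopyX}(A,\coprod_{\alpha}E_{\alpha})
\]
as a direct sum of the maps $s^{\ast}:\Hom_{\stablehomotopyX}(B,E_{\alpha})\to \Hom_{\stablehomotopyX}(A,E_{\alpha})$, each of which is an isomorphism since the $E_{\alpha}$ are $sB_{n}$-local. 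Identifying $\stablenbiratTspectraX$ with the full subcategory of $sB_{n}$-local objects in $\stablehomotopyX$, coproducts in $\stablenbiratTspectraX$ therefore coincide with coproducts in $\stablehomotopyX$, and the compactness of each $G\in\mathcal{G}$ in $\stablehomotopyX$ transfers directly to compactness in $\stablenbiratTspectraX$.
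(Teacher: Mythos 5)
There is a genuine gap in the first step, where you claim that stability of the localization ``follows once one checks that the set $sB_{n}$ is closed, up to weak equivalence in $\nbiratTspectraX$, under smashing with $T$.'' Closure of the localizing maps under suspension is the automatic direction and proves nothing: since $-\wedge S^{1}$ and $-\wedge \Gm$ are left Quillen functors for the localized structure, suspensions of $sB_{n}$-local equivalences are always $sB_{n}$-local equivalences, and dually the local objects are always closed under \emph{loops}. What can fail, and what must be verified for $\stablenbiratTspectraX$ to be triangulated, is that the local objects are closed under $S^{1}$-\emph{suspension}; equivalently, that $sB_{n}$ is closed under $S^{1}$-\emph{desuspension} up to weak equivalence. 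That your criterion is insufficient is shown by localizing ordinary spectra at the single map $\Sigma^{\infty}S^{1}\rightarrow \ast$: this set is closed under suspension up to local equivalence, yet the local objects are the coconnective spectra and the localization is not stable. The needed fact does hold here, but for a different reason than the one you give: by lemma \ref{lemma.relations-generators}, $F_{p+1}(\Gm ^{b+1}\wedge \iota _{U,Y})$ is an $S^{1}$-desuspension of $F_{p}(\Gm ^{b}\wedge \iota _{U,Y})$ up to weak equivalence in $\TspectraX$, and it again lies in $sB_{n}$ because the defining inequality only involves $b-p$ (note, by contrast, that $sB_{n}$ is \emph{not} closed under $\Gm$-desuspension when $b-p=n-r$, which is why only $S^{1}$-stability is available and is all that is needed). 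This is exactly what the paper's argument encodes by proving that $(-\wedge S^{1},\Omega _{S^{1}},\varphi )$ is a Quillen equivalence on $\nbiratTspectraX$, using the fibrancy characterization of proposition \ref{prop.charac.fibrants1} in which all $a,b,p,r\geq 0$ with $b-p\geq n-r$ appear.

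The second half of your argument is correct and in fact more explicit than the paper's, which disposes of compact generation by citing the derived adjunction $(Q,b^{(n)},\varphi )$. Your observation that every map in $sB_{n}$ has compact source and target in $\stablehomotopyX$ (more precisely, that the objects $F_{p}(S^{a}\wedge \Gm ^{b}\wedge U_{+})$ are among the standard compact generators of $\stablehomotopyX$; compactness ``in $\mathcal M$'' is not really the relevant notion), hence that coproducts of local objects are local and coproducts in $\stablenbiratTspectraX$ are computed in $\stablehomotopyX$, is a legitimate way to transfer compactness of the generators, and it is precisely the point that does not follow formally from the existence of the adjunction alone. Once the stability step is repaired as above, your proof is complete.
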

\begin{proof}
	We will prove first that $\stablenbiratTspectraX$ is a triangulated category.  For this, it is enough
	to show that the smash product with the simplicial circle induces a Quillen equivalence
	(see \cite[sections I.2, I.3]{MR0223432})
		\[ (-\wedge S^{1},\Omega _{S^{1}}-,\varphi ): \nbiratTspectraX \rightarrow \nbiratTspectraX
		\]
	It follows from \cite[Thm. 4.1.1.(4)]{MR1944041} that this adjunction is a Quillen
	adjunction, and the same argument as in \cite[Cor. 3.2.38]{MR2807904} (replacing 
	\cite[Prop. 3.2.32]{MR2807904} with proposition \ref{prop.charac.fibrants1})
	allows us to conclude that it is a Quillen equivalence. 
	
	Finally, since $\stablehomotopyX$ is a compactly generated triangulated category 
	(see \cite[Prop. 3.1.5]{MR2807904}) and the identity functor is a left Quillen functor
		\[	id:\TspectraX \rightarrow \nbiratTspectraX
		\]
	it follows from the derived adjunction 
		\[		(Q, b^{(n)},\varphi ):\stablehomotopyX \rightarrow 
					\stablenbiratTspectraX
		\]
	that $\stablenbiratTspectraX$ is also compactly generated, having
	exactly the same set of generators as $\stablehomotopyX$.
\end{proof}

\begin{defi}
		\label{def.localizing-maps2}
	We fix an arbitrary integer  $n\geq 0$, and consider the following set of 
	open immersions with smooth closed complement	of codimension at least $n+1$
\begin{align*}
	WB_{n}=\{ \iota _{U,Y}: &U\rightarrow Y \text{ open immersion } |\\
									& Y, Z=Y\backslash U \in Sm_{X}; Y \text{ irreducible};
									      (codim_{Y}Z)\geq n+1
	\}
\end{align*}	
\end{defi}

	Notice that every map in $WB_{n}$ is  also in $B_{n}$, but the converse doesn't hold.  The
	reason to consider maps $\iota _{U,Y}$ in $WB_{n}$ is that if the closed complement is smooth, then
	the Morel-Voevodsky homotopy purity theorem (see \cite[Thm. 2.23]{MR1813224})
	characterizes the homotopy cofibre
	of $\iota _{U,Y}$ in terms of the Thom space of the normal bundle for the closed immersion
	$Y\backslash U \rightarrow Y$.

\begin{defi}
		\label{def.localizationAmod-weaklyBirat}
	Let $n\in \mathbb Z$ be an arbitrary integer.
	\begin{enumerate}
		\item	\label{def.localizationAmod-weaklyBirat.a}  Let $\nwbiratTspectraX$ denote
					the left Bousfield localization of $\TspectraX$ with respect to the set of maps 
						\[	sWB_{n}=\{ F_{p}(\Gm ^{b}\wedge \iota _{U,Y}): b, p, r\geq 0, 
								b-p\geq n-r; \iota _{U,Y}\in WB_{r} \}.
						\]
		\item	\label{def.localizationAmod-weaklyBirat.b}  Let $wb^{(n)}$ denote its fibrant replacement 
					functor and $\stablenwbiratTspectraX$ its associated homotopy category.
	\end{enumerate}
	For $n\neq 0$ we will call $\stablenwbiratTspectraX$ the 
	\emph{codimension $n+1$-weakly birational motivic stable homotopy category},
	and for $n=0$ we will call it the \emph{weakly birational motivic stable homotopy category}.
\end{defi}

\begin{prop}
		\label{prop.charac.fibrants2}
	Let $E$ be an arbitrary symmetric $T$-spectrum.  Then
	$E$ is fibrant in $\nwbiratTspectraX$ if and only if the following 
	conditions hold:
		\begin{enumerate}
			\item	\label{prop.charac.fibrants2.a}  $E$ is fibrant in $\TspectraX$.
			\item	\label{prop.charac.fibrants2.b}		For every $a, b, p, r\geq 0$
							 such that $b-p\geq n-r$; and every $\iota _{U,Y} \in WB_{r}$,
							 the induced map
								\[ \xymatrix{\Hom _{\stablehomotopyX}(F_{p}(S^{a}\wedge
										 \Gm ^{b} \wedge Y_{+}), E) \ar[r]_-{\cong}^-{\iota _{U,Y}
										 ^{\ast}} &
										\Hom _{\stablehomotopyX}(F_{p}(S^{a}\wedge \Gm ^{b}
										\wedge U_{+}), E)}
								\]
							is an isomorphism.
		\end{enumerate}
\end{prop}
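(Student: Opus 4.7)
The plan is to mirror the proof of Proposition \ref{prop.charac.fibrants1} essentially verbatim. The key observation is that the only property of the set $B_r$ used in that proof is that for $\iota_{U,Y} \in B_r$ both $U$ and $Y$ lie in $Sm_X$, so that $F_p(\Gm^b \wedge U_+)$ and $F_p(\Gm^b \wedge Y_+)$ are cofibrant in $\TspectraX$. Since every map in $WB_r$ is in particular a map in $B_r$ (the only extra condition in $WB_r$ is smoothness of the closed complement, which is irrelevant for cofibrancy), the argument carries through. As a preliminary step I would record the weakly birational analog of Lemma \ref{lemma.generators.becomeequiv.1}: for every $a\geq 0$, each map in $S^a \wedge sWB_n$ is a weak equivalence in $\nwbiratTspectraX$. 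The proof is the identical Ken Brown's lemma argument (\cite[Lemma 1.1.12]{MR1650134}), using that $\nwbiratTspectraX$ is a simplicial model category by \cite[Thm. 4.1.1.(4)]{MR1944041} and that source and target are cofibrant by \cite[Props. 2.4.17, 2.6.18 and Thm. 2.6.30]{MR2807904}.

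For the direction $(\Rightarrow)$, since the identity $\TspectraX \to \nwbiratTspectraX$ is a left Quillen functor, any fibrant object in $\nwbiratTspectraX$ is fibrant in $\TspectraX$, giving (a). Condition (b) then follows from the derived adjunction $(Q, wb^{(n)}, \varphi):\stablehomotopyX \to \stablenwbiratTspectraX$ together with the extended lemma above, applied now to $\iota_{U,Y}\in WB_r$.

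For the direction $(\Leftarrow)$, assume $E$ satisfies (a) and (b). By \cite[Def. 3.1.4(1)(a) and Thm. 4.1.1(2)]{MR1944041} it suffices to show that each map in $sWB_n$ induces a weak equivalence of pointed simplicial mapping spaces $\iota_{U,Y}^*:\mathrm{Map}_*(F_p(\Gm^b \wedge Y_+),E) \to \mathrm{Map}_*(F_p(\Gm^b \wedge U_+),E)$, pointed at the natural zero maps $\omega_0,\eta_0$. Using the simplicial structure of $\TspectraX$ together with \cite[Lemma 6.1.2]{MR1650134} and \cite[Remark 2.4.3(2)]{MR2807904}, the groups $\pi_{a,\omega_0}$ and $\pi_{a,\eta_0}$ of these mapping spaces identify canonically with the Hom-sets appearing in (b), so (b) immediately gives that $\iota_{U,Y}^*$ is an isomorphism on all higher homotopy groups at the chosen base points. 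The one technical step that requires any care, and hence the main obstacle, is upgrading this to a weak equivalence on \emph{all} path components (not only the base-point one); this is handled by the same trick as in Proposition \ref{prop.charac.fibrants1}: apply the $\pi_a$-argument to $F_{p+1}(\Gm^{b+1}\wedge -)$, take $\mathrm{Map}_*(S^1,-)$ to kill the extraneous path components, use the simplicial adjunction to rewrite the resulting mapping spaces as $\mathrm{Map}_*(F_{p+1}(S^1\wedge \Gm^{b+1}\wedge -),E)$, and finally invoke Lemma \ref{lemma.relations-generators} (the natural weak equivalences $g^{1,b+1}_{p+1,p}(-)$) together with the two-out-of-three property to descend the conclusion back to $F_p(\Gm^b\wedge -)$.
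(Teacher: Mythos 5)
Your proposal is correct and coincides with the paper's argument: the paper's proof of this proposition literally consists of the remark that it is exactly the same as that of Proposition \ref{prop.charac.fibrants1}, and your write-up is precisely that argument with $B_{r}$ replaced by $WB_{r}$ (including the implicit $sWB_{n}$-analogue of Lemma \ref{lemma.generators.becomeequiv.1}). No issues.
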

\begin{proof}
	The proof is exactly the same as in proposition 
	\ref{prop.charac.fibrants1}.
\end{proof}

\begin{prop}
		\label{prop.properties-homotweakbirat}
	The homotopy category $\stablenwbiratTspectraX$ is a compactly generated triangulated category in the sense of Neeman.
\end{prop}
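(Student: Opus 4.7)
The plan is to follow the proof of Proposition \ref{prop.properties-homotbirat} essentially verbatim, substituting the weakly birational data for the birational data throughout. As in that proof, the argument has two stages: first establish that $\stablenwbiratTspectraX$ is triangulated, then deduce compact generation from the derived adjunction with $\stablehomotopyX$.

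For the first stage, I would show that smashing with the simplicial circle induces a Quillen equivalence
\[ (-\wedge S^{1},\Omega _{S^{1}}-,\varphi ): \nwbiratTspectraX \rightarrow \nwbiratTspectraX. \]
This is a Quillen adjunction because $\nwbiratTspectraX$ is a simplicial model category by \cite[Thm. 4.1.1.(4)]{MR1944041}. To see that it is a Quillen equivalence, I would repeat the argument of \cite[Cor. 3.2.38]{MR2807904}, which in the unlocalized setting invokes the characterization of fibrant objects in \cite[Prop. 3.2.32]{MR2807904}; in the present localized setting one substitutes Proposition \ref{prop.charac.fibrants2} for that characterization. Once this Quillen equivalence is established, $\stablenwbiratTspectraX$ inherits the structure of a triangulated category in the standard way (see \cite[sections I.2, I.3]{MR0223432}).

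For the second stage, the identity functor $id:\TspectraX \rightarrow \nwbiratTspectraX$ is a left Quillen functor by construction of the left Bousfield localization, so we obtain a derived adjunction
\[ (Q, wb^{(n)},\varphi ):\stablehomotopyX \rightarrow \stablenwbiratTspectraX. \]
Since $\stablehomotopyX$ is compactly generated by \cite[Prop. 3.1.5]{MR2807904}, the image under $Q$ of a compact generating family remains a compact generating family for $\stablenwbiratTspectraX$, exactly as in the conclusion of Proposition \ref{prop.properties-homotbirat}. Hence $\stablenwbiratTspectraX$ is compactly generated in the sense of Neeman, with the same set of generators as $\stablehomotopyX$.

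The only potential obstacle is the verification that smashing with $S^{1}$ remains a Quillen equivalence after passing to the left Bousfield localization, but since Proposition \ref{prop.charac.fibrants2} is formally identical to Proposition \ref{prop.charac.fibrants1} (only the localizing class of maps changes), the argument of Proposition \ref{prop.properties-homotbirat} transfers with no essential modification. The proof should therefore reduce to the single remark that it is \emph{mutatis mutandis} the same as in the birational case.
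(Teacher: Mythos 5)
Your proposal is correct and coincides with the paper's proof, which simply states that the argument is exactly the same as in Proposition \ref{prop.properties-homotbirat}; your substitution of Proposition \ref{prop.charac.fibrants2} for Proposition \ref{prop.charac.fibrants1} and of $wb^{(n)}$ for $b^{(n)}$ is precisely what is intended.
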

\begin{proof}
	The proof is exactly the same as in proposition \ref{prop.properties-homotbirat}.
\end{proof}

\begin{prop}
		\label{prop.genericsmoothness-Quillenequiv}
	Assume that the base scheme $X=\mathrm{Spec}\; k$, with $k$ a perfect field, 
	then the Quillen adjunction:
		$$(id,id,\varphi):\nwbiratTspectraX \rightarrow \nbiratTspectraX
		$$
	is a Quillen equivalence.
\end{prop}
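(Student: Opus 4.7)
The plan is to show that the two left Bousfield localizations actually coincide as model structures, which immediately forces the identity Quillen adjunction to be a Quillen equivalence. Since $sWB_n\subseteq sB_n$, every $\nwbiratTspectraX$-weak equivalence is a $\nbiratTspectraX$-weak equivalence and, dually, every $\nbiratTspectraX$-fibrant object is $\nwbiratTspectraX$-fibrant. Hence the content is the reverse inclusion of fibrant objects, and comparing the criteria in propositions \ref{prop.charac.fibrants1} and \ref{prop.charac.fibrants2} reduces it to the following claim: if $E$ is a symmetric $T$-spectrum such that $\iota_{U,Y}^{\ast}$ is an isomorphism on $\Hom_{\stablehomotopyX}(F_{p}(S^{a}\wedge \Gm^{b}\wedge(-)),E)$ for every $\iota_{U,Y}\in WB_{r}$ and every $a,b,p,r\geq 0$ with $b-p\geq n-r$, then the same holds for every $\iota_{U,Y}\in B_{r}$.

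The proof of this claim will proceed by induction on $d=\dim(Y\backslash U)$. The base case $d=0$ uses perfectness of $k$ essentially: the reduced zero-dimensional $k$-scheme $Y\backslash U$ is a finite disjoint union of spectra of finite separable extensions and is therefore smooth, so $\iota_{U,Y}$ already lies in $WB_{r}$ and the hypothesis on $E$ applies directly.

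For the inductive step, let $Z=Y\backslash U$ carry the reduced structure. By generic smoothness over the perfect field $k$, the smooth locus $Z^{sm}\subseteq Z$ is open and dense, and its complement $Z_{sing}$ satisfies $\dim Z_{sing}<d$. Put $U'=Y\backslash Z_{sing}$; then $U'$ is open in the irreducible smooth $Y$, hence irreducible and smooth, and the factorization
\[
\iota_{U,Y}=\iota_{U',Y}\circ \iota_{U,U'}
\]
splits the problem into two pieces. The first factor $\iota_{U',Y}$ has closed complement $Z_{sing}$ of codimension at least $r+2$ in $Y$, so it lies in $B_{r'}$ for some $r'\geq r+1$; since $\dim Z_{sing}<d$ and $b-p\geq n-r\geq n-r'$, the inductive hypothesis yields that $\iota_{U',Y}^{\ast}$ is an isomorphism. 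The second factor $\iota_{U,U'}$ has closed complement $Z^{sm}=Z\cap U'$, which is smooth and has codimension $r+1$ in the smooth irreducible $U'$, so $\iota_{U,U'}\in WB_{r}$ and the $WB_{r}$-hypothesis on $E$ gives that $\iota_{U,U'}^{\ast}$ is an isomorphism. Composing closes the induction.

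The only nontrivial input is generic smoothness over a perfect field, which is also what makes the base case work; this is the single place where the assumption on $k$ is used, and without it the induction collapses at $d=0$. The rest is formal bookkeeping of the numerical constraints $(a,b,p,n,r)$ through the factorization, the identification $codim_{U'}Z^{sm}=codim_{Y}Z=r+1$, and the Bousfield-localization principle that two localizations of $\TspectraX$ with the same cofibrations and the same class of fibrant objects must coincide as model structures, so that the identity is then automatically a Quillen equivalence.
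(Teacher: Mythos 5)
Your proposal is correct and is essentially the paper's own argument: the geometric core is the identical induction on $\dim (Y\backslash U)$, factoring $\iota_{U,Y}$ through the complement of the singular locus of $Z=Y\backslash U$ and using perfectness of $k$ both for the zero-dimensional base case and to ensure the singular locus has strictly smaller dimension. The only (cosmetic) difference is in the packaging: you verify that the fibrant objects of $\nwbiratTspectraX$ and $\nbiratTspectraX$ coincide via propositions \ref{prop.charac.fibrants1} and \ref{prop.charac.fibrants2} and then invoke the same-cofibrations/same-fibrant-objects principle, whereas the paper shows directly that every map in $sB_{n}$ becomes a weak equivalence in $\nwbiratTspectraX$ and concludes by the universal property of left Bousfield localization.
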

\begin{proof}
	Consider the following commutative diagram
		\[	\xymatrix{ & \TspectraX \ar[dl]_-{id} \ar[dr]^-{id}& \\
							\nwbiratTspectraX \ar@{-->}[rr]_-{id}& & \nbiratTspectraX}
		\]
	where the solid arrows are left Quillen functors.  Clearly, 
	$WB_{r}\subseteq B_{r}$ for every $r\geq 0$, so
	$sWB_{n}\subseteq sB_{n}$, and we conclude that every $sWB_{n}$-local equivalence is
	a $sB_{n}$-local equivalence.  Therefore,  the universal
	property of left Bousfield localizations implies that the horizontal 
	arrow is also a left Quillen functor.
	
	The universal property for left Bousfield localizations also implies that
	it is enough to show that all the maps in 
		\[	sB_{n}=\{ F_{p}(\Gm ^{b}\wedge \iota _{U,Y}): b, p, r\geq 0, 
					b-p\geq n-r; \iota _{U,Y}\in B_{r} \}
		\]
	become weak equivalences in $\nwbiratTspectraX$.  Given
	$F_{p}(\Gm ^{b}\wedge \iota _{U,Y})\in sB_{n}$
	with $\iota_{U,Y}\in B_{r}$, we proceed by induction
	on the dimension of $Z=Y\backslash U$.  If $\kdim \; Z=0$, then
	$Z\in Sm_{X} $ since $k$ is a perfect field (and we are considering
	$Z$ with the reduced scheme structure), hence
	$F_{p}(\Gm ^{b}\wedge \iota _{U,Y})\in sWB_{n}$ and then a
	weak equivalence in $\nwbiratTspectraX$.
	
	If $\kdim \; Z>0$, then
	we consider the singular locus $Z_{s}$ of $Z$ over $X$.  We have that
	$\kdim \; Z_{s}<\kdim \; Z$ since $k$ is a perfect field.  Therefore, by
	induction on the dimension $F_{p}(\Gm ^{b}\wedge \iota _{V,Y})$
	is a weak equivalence in $\nwbiratTspectraX$, where $V=Y\backslash
	Z_{s}$.  On the other hand, $F_{p}(\Gm ^{b}\wedge \iota _{U,V})$
	is also a weak equivalence in $\nwbiratTspectraX$ since 
	$\iota _{U,V}$ is also in $B_{r}$ and its closed
	complement $V\backslash U=Z\backslash Z_{s}$ is smooth
	over $X$, by construction of $Z_{s}$.
	
	But $F_{p}(\Gm ^{b}\wedge \iota _{U,Y})=F_{p}(\Gm ^{b}\wedge 
	\iota _{V,Y})\circ F_{p}(\Gm ^{b}\wedge \iota _{U,V})$, so
	by the two out of three property for weak equivalences we
	conclude that $F_{p}(\Gm ^{b}\wedge \iota _{U,Y})$ is a weak equivalence
	in $\nwbiratTspectraX$.
\end{proof}

\end{section}
\begin{section}{A Characterization of the Slices}
		\label{sect-1}
				
	This section contains our main results.  We give a characterization of the slices in terms
	of effectivity and birational conditions (in the sense of definition \ref{def.n-birational}), 
	and we also show that there is an
	equivalence between the notion of orthogonality (see definition \ref{def.orthogonality})
	and weak birationality (see definition \ref{def.n-birational}).

\begin{defi}
		\label{def.n-birational}
	Let $E\in \TspectraX$ be a symmetric $T$-spectrum and $n\in \mathbb Z$.
	\begin{enumerate}
		\item \label{def.n-birational.a}  We will say that $E$ is \emph{$n+1$-birational} 
					(respectively \emph{weakly $n+1$-birational}),
					if $E$ is fibrant in $\nbiratTspectraX$ (respectively $\nwbiratTspectraX$).  If $n=0$,
					we will simply say that $E$ is \emph{birational} (respectively \emph{weakly birational}).
		\item	\label{def.n-birational.b}  We will say that $E$ is an
	\emph{$n$-slice} if $E$ is isomorphic in $\stablehomotopyX$ to $s_{n}(E')$ for some symmetric 
	$T$-spectrum
	$E'$.
	\end{enumerate}
\end{defi}

\begin{defi}
		\label{def.cofibres}
	\begin{enumerate}
		\item \label{def.cofibres.a}  Let $\iota _{U,Y}$ be an open immersion in $Sm_{X}$.  Let
				$Y/U$ denote the pushout of the following
				diagram in $\mathcal M$ (i.e. the homotopy cofibre of $\iota _{U,Y}$ in $\mathcal M$)
					\[	\xymatrix{ U_{+} \ar[r]^-{\iota _{U,Y}} \ar[d]& Y_{+} \ar[d]\\
							X \ar[r] & Y/U}
					\]
		\item	\label{def.cofibres.b}  Given a vector bundle $\pi: V\rightarrow Y$ with $Y\in Sm_{X}$, let $Th(V)$ denote the Thom
	space of $V$, i.e. $V/(V\backslash \sigma _{0}(Y))$, where 
	$\sigma _{0}:Y\rightarrow V$ denotes the zero section of $V$.
	\end{enumerate}
\end{defi}

\begin{lem}
		\label{lemma.characterizing-cofibres}
	Let $\iota _{U,Y}\in WB_{r}$ for some $r\geq 0$, and
	let $a,b,p\geq 0$ be arbitrary integers such that $b-p\geq n-r$.  Then
		\[		F_{p}(S^{a}\wedge \Gm ^{b}\wedge Y/U)\in 
					\Sigma _{T}^{n+1}\stablehomotopyX^{\mathit{eff}}
		\]
\end{lem}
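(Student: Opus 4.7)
I would combine the Morel--Voevodsky homotopy purity theorem with a Mayer--Vietoris argument on a trivializing Zariski cover of the normal bundle, reducing everything to the case of a trivial vector bundle.

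Since $\iota_{U,Y}\in WB_{r}$, the closed complement $Z=Y\backslash U$ lies in $Sm_{X}$ and has codimension $d=codim_{Y}Z\geq r+1$ in $Y$, so Morel--Voevodsky homotopy purity provides an $\mathbb A^{1}$-weak equivalence in $\mathcal M$
$$Y/U \simeq Th(N_{Z/Y}),$$
where $N_{Z/Y}$ is the rank-$d$ normal bundle of the closed immersion $Z\hookrightarrow Y$. Combining the hypotheses $b-p\geq n-r$ and $d\geq r+1$ gives $(b-p)+d\geq n+1$, so the problem reduces to the following statement: for every rank-$d$ vector bundle $V\to W$ with $W\in Sm_{X}$ and every $a,b,p\geq 0$ with $(b-p)+d\geq n+1$,
$$F_{p}(S^{a}\wedge \Gm^{b}\wedge Th(V))\in \Sigma_{T}^{n+1}\stablehomotopyX^{\mathit{eff}}.$$

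I would prove this by induction on the minimal cardinality $m$ of a Zariski cover $W=W_{1}\cup\cdots\cup W_{m}$ trivializing $V$, which is finite because $W$ is of finite type over the Noetherian base $X$. For $m=1$, $V\cong W\times\mathbb A^{d}$ and $Th(V)\simeq S^{d}\wedge \Gm^{d}\wedge W_{+}$, so
$$F_{p}(S^{a}\wedge \Gm^{b}\wedge Th(V)) = F_{p}(S^{a+d}\wedge \Gm^{b+d}\wedge W_{+})$$
lies in $C^{(b+d)-p}_{\mathit{eff}}\subseteq C^{n+1}_{\mathit{eff}}\subseteq \Sigma_{T}^{n+1}\stablehomotopyX^{\mathit{eff}}$. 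For the inductive step, write $W=W'\cup W_{m}$ with $W'=W_{1}\cup\cdots\cup W_{m-1}$; Nisnevich excision applied to the Zariski squares for $V$ and for $V\backslash \sigma_{0}(W)$, followed by the pointed quotient defining the Thom space, yields a homotopy cofibre sequence in $\mathcal M$
$$Th(V|_{W'\cap W_{m}})\to Th(V|_{W'})\vee Th(V|_{W_{m}})\to Th(V).$$
Applying the left Quillen functor $F_{p}(S^{a}\wedge \Gm^{b}\wedge -)$ produces a distinguished triangle in $\stablehomotopyX$ whose first two vertices lie in $\Sigma_{T}^{n+1}\stablehomotopyX^{\mathit{eff}}$ by induction; so does the third, since this subcategory is triangulated and closed under arbitrary coproducts.

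The main obstacle I anticipate is the Mayer--Vietoris step: one must verify carefully that Nisnevich excision, applied simultaneously to the open covers of $V$ and of the complement of the zero section, descends to a homotopy cofibre sequence of the corresponding Thom spaces in $\mathcal M$. Everything else amounts to bookkeeping the inequalities defining $C^{q}_{\mathit{eff}}$ and invoking the closure properties of $\Sigma_{T}^{n+1}\stablehomotopyX^{\mathit{eff}}$.
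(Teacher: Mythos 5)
Your proposal is correct and follows essentially the same route as the paper's proof: homotopy purity to replace $Y/U$ by the Thom space of the normal bundle, the trivial-bundle case via $Th(\mathbb A^{d}_{W})\simeq S^{d}\wedge \Gm^{d}\wedge W_{+}$, and a Mayer--Vietoris induction over a trivializing Zariski cover. The only difference is organizational: the paper first strips off the $F_{p}(S^{a}\wedge\Gm^{b}\wedge-)$ factors and proves $F_{0}(Y/U)\in\Sigma_{T}^{r+1}\stablehomotopyX^{\mathit{eff}}$, whereas you carry the suspension bookkeeping through the induction explicitly, which is equally valid.
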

\begin{proof}
	Since $\Sigma _{T}^{n+1}\stablehomotopyX^{\mathit{eff}}$ is a triangulated category, it is 
	enough to consider the case $a=0$.  It is also clear that it suffices to show that $F_{0}(Y/U)\in 
	\Sigma _{T}^{r+1}\stablehomotopyX^{\mathit{eff}}$.
	
	Now, it follows from the Morel-Voevodsky homotopy purity theorem 
	(see \cite[Thm. 2.23]{MR1813224})
	that there is an isomorphism in $\stablehomotopyX$
		\[ F_{0}(Y/U) \rightarrow F_{0}(Th(N))
		\]
	where Th(N) is the Thom space of the normal bundle $N$ of the (smooth) complement
	$Z$ of U in $Y$:
		\[	e:Y\backslash U=Z \rightarrow Y
		\]
	But, $\iota _{U,Y}\in WB_{r}$;
	so $e$ is a regular embedding of codimension $c$ at least $r+1$, hence $N$ is a vector bundle
	of rank at least $r+1$.  Therefore, if $N$ is a trivial vector bundle we conclude from
	\cite[Prop. 2.17(2)]{MR1813224} that 
		\[	F_{0}(Th(N))\cong F_{0}(S^{c}\wedge \Gm ^{c}\wedge Z_{+}) \in 
			\Sigma _{T}^{c}\stablehomotopyX^{\mathit{eff}}
			\subseteq \Sigma _{T}^{r+1}\stablehomotopyX^{\mathit{eff}}
		\]
	Finally, we conclude in the general case by choosing a Zariski cover of $Z$ which trivializes
	$N$ and using the Mayer-Vietoris property for Zariski covers.
\end{proof}

\begin{lem}
		\label{lemma.smashGm=cofibre}
	Let $U\in Sm_{X}$.  Consider the 
	open immersion in $Sm_{X}$
		\[	m _{U}:\mathbb A ^{1}_{U}\backslash U \rightarrow
				 \mathbb A ^{1}_{U}
		\]
	given by the complement of the zero section.  Then
	$m _{U}\in WB_{0}$, and there exists a weak equivalence in $\TspectraX$
	between its homotopy cofibre in $\mathcal M$, $\mathbb A ^{1}_{U}/(\mathbb 
	A ^{1}_{U}\backslash U)$ 
	and $S^{1}\wedge \Gm \wedge U_{+}$ 
		\[	t_{U}:\mathbb A ^{1}_{U}/(\mathbb A ^{1}_{U}\backslash U)
				\rightarrow S^{1}\wedge \Gm \wedge U_{+}
		\]
\end{lem}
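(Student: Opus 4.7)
My plan is to dispatch the two assertions separately, first verifying membership in $WB_{0}$ and then constructing the weak equivalence $t_{U}$ by specializing the Morel--Voevodsky purity machinery already used in the proof of Lemma~\ref{lemma.characterizing-cofibres}.

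For the first assertion, I would observe that the closed complement of $m_{U}$ is the zero section of the trivial line bundle $\mathbb{A}^{1}\times _{X} U \rightarrow U$, which is isomorphic as an $X$-scheme to $U$ via $u\mapsto (0,u)$, and is therefore smooth over $X$. The ambient scheme $\mathbb{A}^{1}_{U}=\mathbb{A}^{1}\times _{X}U$ is smooth over $X$ as a product of smooth $X$-schemes, and the zero section has codimension $1=0+1$ in $\mathbb{A}^{1}_{U}$. When $U$ is irreducible, $\mathbb{A}^{1}_{U}$ is as well, so all the conditions of Definition~\ref{def.localizing-maps2} are satisfied. For a general $U\in Sm_{X}$, one decomposes $U$ into its (finitely many) irreducible components and treats $m_{U}$ as the disjoint union of the corresponding maps, each individually in $WB_{0}$.

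For the construction of $t_{U}$, I would specialize the argument from Lemma~\ref{lemma.characterizing-cofibres}. The closed immersion $e\colon U\rightarrow \mathbb{A}^{1}_{U}$, $u\mapsto (0,u)$, is a regular embedding of codimension $1$, and its normal bundle is the trivial line bundle $\mathcal{O}_{U}$, since $\mathbb{A}^{1}_{U}\rightarrow U$ is itself a trivial line bundle with $e$ as its zero section. The Morel--Voevodsky homotopy purity theorem \cite[Thm. 2.23]{MR1813224} then produces a canonical isomorphism in $\stablehomotopyX$ between $F_{0}(\mathbb{A}^{1}_{U}/(\mathbb{A}^{1}_{U}\backslash U))$ and $F_{0}(Th(\mathcal{O}_{U}))$. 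Since the normal bundle is trivial of rank one, \cite[Prop. 2.17(2)]{MR1813224} identifies $Th(\mathcal{O}_{U})$ with $S^{1}\wedge \Gm \wedge U_{+}$. Composing these two identifications and realizing the composite isomorphism by a map in $\TspectraX$ (either directly or via a zigzag of weak equivalences) produces the required $t_{U}$.

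The main obstacle is essentially bookkeeping rather than any substantive difficulty: the irreducibility clause in $WB_{0}$ has to be handled by passing to connected components, and the direction of the purity isomorphism has to be tracked so that $t_{U}$ points the right way. All the hard content has already been established, either in Lemma~\ref{lemma.characterizing-cofibres} (where the trivialization of $N$ via a Zariski cover was the real work) or in the cited results of Morel--Voevodsky; here the normal bundle is automatically trivial, so no Mayer--Vietoris step is required and the argument collapses to a direct application of the two cited propositions.
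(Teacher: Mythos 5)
Your argument is correct and essentially the same as the paper's: membership in $WB_{0}$ follows from observing that the zero section is a smooth closed subscheme of codimension one, and the weak equivalence $t_{U}$ comes from \cite[Prop. 2.17(2)]{MR1813224}. The only superfluous step is your appeal to homotopy purity, since $\mathbb{A}^{1}_{U}/(\mathbb{A}^{1}_{U}\backslash U)$ is already, by definition \ref{def.cofibres}(\ref{def.cofibres.b}), the Thom space of the trivial line bundle over $U$, so \cite[Prop. 2.17(2)]{MR1813224} applies directly (as in the paper); your extra care with the irreducibility clause of definition \ref{def.localizing-maps2} is a refinement the paper's proof itself glosses over.
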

\begin{proof}
	Since the zero section $i_{0}:U\rightarrow \mathbb A ^{1}_{U}$ is a closed embedding of
	codimension $1$ between smooth schemes over $X$, it follows from the definition of 
	$WB_{0}$ that
	$m_{U}\in WB_{0}$.  Finally, \cite[Prop. 2.17(2)]{MR1813224} implies the existence of the
	weak equivalence $t_{U}$.
\end{proof}

\begin{prop}
		\label{prop.characterization-orthogonality}
	Let $E\in \TspectraX$ be a symmetric $T$-spectrum and $n\in \mathbb Z$.  Consider
	the following conditions:
		\begin{enumerate}
			\item \label{prop.characterization-orthogonality.a}  $E$
				is fibrant in $\nplusoneorthogonalTspectraX$. 
			\item \label{prop.characterization-orthogonality.b}  $E$ is weakly $n+1$-birational
				(see definition \ref{def.n-birational}(\ref{def.n-birational.a})).
			\item \label{prop.characterization-orthogonality.c} $E$ is $n+1$-birational
				(see definition \ref{def.n-birational}(\ref{def.n-birational.a})).
		\end{enumerate}
	Then (\ref{prop.characterization-orthogonality.a}) and (\ref{prop.characterization-orthogonality.b})
	are equivalent.  In addition, if the base scheme $X=\mathrm{Spec}\; k$, with $k$ a perfect field, then
	(\ref{prop.characterization-orthogonality.a}), (\ref{prop.characterization-orthogonality.b}) and
	(\ref{prop.characterization-orthogonality.c}) are equivalent.
\end{prop}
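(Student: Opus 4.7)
The plan is to first establish (\ref{prop.characterization-orthogonality.a}) $\Leftrightarrow$ (\ref{prop.characterization-orthogonality.b}) over an arbitrary base, and then to deduce (\ref{prop.characterization-orthogonality.b}) $\Leftrightarrow$ (\ref{prop.characterization-orthogonality.c}) under the perfect-field assumption from Proposition \ref{prop.genericsmoothness-Quillenequiv}. Throughout, I would rely on Theorem \ref{thm.modelstructures-slicefiltration}(\ref{thm.modelstructures-slicefiltration.b}), which identifies (\ref{prop.characterization-orthogonality.a}) with the condition that $\Hom_{\stablehomotopyX}(K,E) = 0$ for every $K \in \Sigma_T^{n+1}\stablehomotopyX^{\mathit{eff}}$, together with the fibrancy criterion of Proposition \ref{prop.charac.fibrants2}.

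The easy direction (\ref{prop.characterization-orthogonality.a}) $\Rightarrow$ (\ref{prop.characterization-orthogonality.b}) follows by inspection: given $\iota_{U,Y} \in WB_r$ and $a,b,p \geq 0$ with $b-p \geq n-r$, Lemma \ref{lemma.characterizing-cofibres} places both $F_p(S^a \wedge \Gm^b \wedge Y/U)$ and its $S^1$-suspension in $\Sigma_T^{n+1}\stablehomotopyX^{\mathit{eff}}$, so (\ref{prop.characterization-orthogonality.a}) forces the relevant cofibre terms in the long exact sequence obtained from applying $\Hom_{\stablehomotopyX}(-,E)$ to the cofibre triangle of $F_p(S^a \wedge \Gm^b \wedge \iota_{U,Y})$ to vanish, and $\iota_{U,Y}^\ast$ becomes an isomorphism.

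The substantive direction is (\ref{prop.characterization-orthogonality.b}) $\Rightarrow$ (\ref{prop.characterization-orthogonality.a}). Since $\Sigma_T^{n+1}\stablehomotopyX^{\mathit{eff}}$ is generated by $C^{n+1}_{\mathit{eff}}$ under coproducts and triangles, it suffices to show $\Hom_{\stablehomotopyX}(F_p(S^a \wedge \Gm^b \wedge U_+),E) = 0$ whenever $b - p \geq n + 1$. My plan is to realize each such generator as the cofibre of (the $S^a$-suspension of) a map in $sWB_n$, and then combine Lemma \ref{lemma.generators.becomeequiv.1} with (\ref{prop.characterization-orthogonality.b}). Concretely, for every $j \geq 1$ the open immersion $\iota_j: \mathbb A^j_U \setminus U \to \mathbb A^j_U$ lies in $WB_{j-1}$, and iterating Lemma \ref{lemma.smashGm=cofibre} (equivalently, invoking Morel-Voevodsky purity for the trivial rank-$j$ bundle on $U$) yields a weak equivalence
$$\mathbb A^j_U / (\mathbb A^j_U \setminus U) \simeq S^j \wedge \Gm^j \wedge U_+$$
in $\TspectraX$. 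After using Lemma \ref{lemma.relations-generators} to shift $a,b,p$ upward by a common integer so that $\min(a,b) \geq 1$, the choice $j = \min(a,b)$ exhibits $F_p(S^a \wedge \Gm^b \wedge U_+)$ as the cofibre of $F_p(S^{a-j} \wedge \Gm^{b-j} \wedge \iota_j)$; the constraint $(b-j)-p \geq n-(j-1)$ needed to place this map (modulo the external $S^{a-j}$-suspension) into $sWB_n$ is precisely the hypothesis $b - p \geq n + 1$.

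For (\ref{prop.characterization-orthogonality.b}) $\Leftrightarrow$ (\ref{prop.characterization-orthogonality.c}), the implication (\ref{prop.characterization-orthogonality.c}) $\Rightarrow$ (\ref{prop.characterization-orthogonality.b}) follows from $sWB_n \subseteq sB_n$ together with Propositions \ref{prop.charac.fibrants1} and \ref{prop.charac.fibrants2}, while the converse is read off from the proof of Proposition \ref{prop.genericsmoothness-Quillenequiv}, where every map in $sB_n$ is shown to become a weak equivalence in $\nwbiratTspectraX$. The main obstacle of the whole argument is the index bookkeeping in (\ref{prop.characterization-orthogonality.b}) $\Rightarrow$ (\ref{prop.characterization-orthogonality.a}): the Thom-space model only yields a map in $sWB_n$ after the generator has been normalized so that $\min(a,b) \geq 1$, which is precisely where the shifting trick supplied by Lemma \ref{lemma.relations-generators} becomes essential.
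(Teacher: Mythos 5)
Your proof is correct and follows the paper's strategy in all three steps: both directions of the equivalence (1)$\Leftrightarrow$(2) are reduced, through the fibrancy criteria, to vanishing of $\Hom$ out of cofibres of maps in $sWB_{n}$, with Morel--Voevodsky purity for trivial bundles identifying those cofibres, and (2)$\Leftrightarrow$(3) is extracted from Proposition \ref{prop.genericsmoothness-Quillenequiv}. The one genuine (if mild) deviation is in (2)$\Rightarrow$(1): the paper uses Lemma \ref{lemma.relations-generators} to trade a generator $F_{p}(\Gm ^{b}\wedge U_{+})$ for $F_{p+1}(S^{1}\wedge \Gm ^{b+1}\wedge U_{+})$ and then realizes it as the cofibre of a single codimension-one map $F_{p+1}(\Gm ^{b}\wedge m_{U})$ with $m_{U}\in WB_{0}$ (Lemma \ref{lemma.smashGm=cofibre}), whereas you absorb $S^{j}\wedge \Gm ^{j}$ in one step using the zero-section complement $\mathbb A ^{j}_{U}\setminus U\subset \mathbb A ^{j}_{U}$, a map in $WB_{j-1}$, with $j=\min (a,b)$ after an upward shift; your index check $(b-j)-p\geq n-(j-1)\Leftrightarrow b-p\geq n+1$ is right, and the rank-$j$ purity input is the same \cite[Prop. 2.17(2)]{MR1813224} statement the paper already uses in Lemma \ref{lemma.characterizing-cofibres}, so the two routes are of equal cost (yours avoids the paper's two-stage reduction, the paper's only ever needs $WB_{0}$). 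Two points to tighten: in (1)$\Rightarrow$(2) the long exact sequence needs vanishing of $\Hom$ out of the cofibre and out of its $S^{1}$-\emph{desuspension} (for surjectivity of $\iota _{U,Y}^{\ast}$), not its suspension; this is harmless since $\Sigma _{T}^{n+1}\stablehomotopyX ^{\mathit{eff}}$ is triangulated and hence closed under both shifts, and the paper makes the desuspension explicit as $F_{p+1}(S^{a}\wedge \Gm ^{b+1}\wedge Y/U)$ via Lemma \ref{lemma.relations-generators}. Also, the identification of condition (1) with ``fibrant in $\TspectraX$ plus $\Hom$-orthogonality to $C^{n+1}_{\mathit{eff}}$'' is not literally Theorem \ref{thm.modelstructures-slicefiltration}(\ref{thm.modelstructures-slicefiltration.b}); the paper quotes it as \cite[Prop. 3.3.30]{MR2807904} (alternatively one reruns the loop-space argument of Proposition \ref{prop.charac.fibrants1}), so that is the reference your argument should lean on at that point.
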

\begin{proof}
	(\ref{prop.characterization-orthogonality.a})$\Rightarrow$(\ref{prop.characterization-orthogonality.b}):
	Assume that $E$ is fibrant in $\nplusoneorthogonalTspectraX$.
	By proposition \ref{prop.charac.fibrants2} it suffices to show that
	for every $a,b,p,r\geq 0$ with $b-p\geq n-r$, and every
	$\iota _{U,Y}\in WB_{r}$; the induced map
		\[ \xymatrix{\Hom _{\stablehomotopyX}(F_{p}(S^{a}\wedge
					 \Gm ^{b} \wedge Y_{+}), E) \ar[r]_-{\cong}^-{\iota 
					 _{U,Y}^{\ast}} & \Hom _{\stablehomotopyX}(F_{p}(S
					 ^{a}\wedge \Gm ^{b}\wedge U_{+}), E)}
		\]
	is an isomorphism.  We observe that
		\[		F_{p}(S^{a}\wedge \Gm ^{b}\wedge -): \mathcal M
					\rightarrow \TspectraX
		\]
	is a left Quillen functor, therefore the following
		\[	\xymatrix@C=0.9pc{F_{p}(S^{a}\wedge \Gm ^{b}\wedge U_{+}) 
						\ar[rrrr]^-{F_{p}(S^{a}\wedge \Gm ^{b} \wedge
						\iota _{U,Y})} &&&&
						F_{p}(S^{a}\wedge \Gm ^{b}\wedge Y_{+}) \ar[r]&
						F_{p}(S^{a}\wedge \Gm ^{b}\wedge Y/U)}
		\]
	is a cofibre sequence in $\TspectraX$.  However, 
	$\stablehomotopyX$ is a triangulated category and lemma \ref{lemma.relations-generators} 
	implies that 
		\[F_{p+1}(S^{a}\wedge \Gm ^{b+1}\wedge Y/U)\cong \Omega _{S^{1}}\circ R\circ F_{p}(S^{a}\wedge 
			\Gm ^{b}\wedge Y/U)
		\]
	are isomorphic in $\stablehomotopyX$, where $R$ denotes a fibrant replacement functor in $\TspectraX$.
	Hence it suffices to show that
		\[		\Hom _{\stablehomotopyX}(F_{p+1}(S^{a}\wedge
				\Gm ^{b+1}\wedge Y/U),E)= \Hom _{\stablehomotopyX}(F_{p}(S^{a}\wedge
				\Gm ^{b}\wedge Y/U),E)=0
		\]
	But this follows from lemma \ref{lemma.characterizing-cofibres}
	together with \cite[Prop. 3.3.30]{MR2807904}, since we are
	assuming that $E$ is fibrant in $\nplusoneorthogonalTspectraX$.
	
	(\ref{prop.characterization-orthogonality.b})$\Rightarrow$(\ref{prop.characterization-orthogonality.a})
	Assume that $E$ is $n+1$-weakly birational.  Then,
	proposition 3.3.30 in \cite{MR2807904} implies that it suffices
	to show that
		\[		\Hom _{\stablehomotopyX}(F_{p}(S^{a}\wedge
					\Gm ^{b}\wedge U_{+}),E)=0
		\]
	for every $F_{p}(S^{a}\wedge \Gm ^{b}\wedge U_{+})\in 
	C^{n+1}_{\mathit{eff}}$.  
	
	The same argument as in lemma
	\ref{lemma.generators.becomeequiv.1}
	implies that it is enough to consider the case when
	$F_{p}(\Gm ^{b}\wedge U _{+})\in C_{\mathit{eff}}^{n+1}$.
	Moreover, we can further reduce to the case where $b,p\geq 1$
	and $F_{p}(S^{1}\wedge \Gm ^{b}\wedge U _{+})
	\in C_{\mathit{eff}}^{n+1}$.
	In effect, if $F_{p}(\Gm ^{b}\wedge U _{+})\in C_{\mathit{eff}}^{n+1}$,
	then lemma \ref{lemma.relations-generators} implies that the natural map 
		\[	g^{1,b+1}_{p+1,p}(U):F_{p+1}(S^{1}\wedge \Gm^{b+1} \wedge U_{+})\rightarrow 
			F_{p}(\Gm ^{b}\wedge U _{+})
		\] 
	is a weak equivalence in $\TspectraX$.
	
	Now, it follows from lemma \ref{lemma.smashGm=cofibre}, that
	if $b\geq 1$, and $0-p+(b-1)\geq n$ (i.e. $b-p\geq n+1$); then
	$F_{p}(\Gm ^{b-1}\wedge m_{U})\in sWB_{n}$, i.e. a weak equivalence
	in $\nwbiratTspectraX$.
	
	Since $\stablenwbiratTspectraX$ is a triangulated category,
	$id:\TspectraX \rightarrow \nwbiratTspectraX$ is a left Quillen functor, and
	$F_{p}(\Gm ^{b-1}\wedge (\mathbb A ^{1}_{U}/(\mathbb A ^{1}_{U}\backslash U_{+})))$ 
	is the homotopy
	cofibre of $F_{p}(\Gm ^{b-1}\wedge m_{U})$; we deduce that $E$ being
	$n+1$-weakly birational implies that
		\[  \Hom _{\stablehomotopyX}(F_{p}(\Gm ^{b-1}\wedge 
			(\mathbb A ^{1}_{U}/(\mathbb A ^{1}_{U}\backslash U_{+}))),E)=0
		\]
	Finally, it follows from lemma \ref{lemma.smashGm=cofibre} that the following groups are isomorphic
		\begin{align*}
			0 & = \Hom _{\stablehomotopyX}(F_{p}(\Gm ^{b-1}\wedge 
			(\mathbb A ^{1}_{U}/(\mathbb A ^{1}_{U}\backslash U_{+}))),E) \\ & \cong 
			\Hom _{\stablehomotopyX}(F_{p}(S^{1}\wedge \Gm ^{b}\wedge U_{+}),E) 
		\end{align*}
	
	(\ref{prop.characterization-orthogonality.b})$\Leftrightarrow$(\ref{prop.characterization-orthogonality.c}):  This
	follows directly from proposition \ref{prop.genericsmoothness-Quillenequiv}.
\end{proof}

\begin{thm}
		\label{thm.orthogonal=wbirationalTspectra}
	The Quillen adjunction
		\[ (id, id, \varphi ): \nwbiratTspectraX \rightarrow \nplusoneorthogonalTspectraX
		\]
	is a Quillen equivalence.  In addition, if the base scheme $X=\mathrm{Spec}\; k$, with $k$ a perfect field,
	then the Quillen adjunction
		\[ (id, id, \varphi ): \nbiratTspectraX \rightarrow \nplusoneorthogonalTspectraX
		\]
	is also a Quillen equivalence.
\end{thm}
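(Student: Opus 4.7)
The plan is to deduce the Quillen equivalence formally from Proposition \ref{prop.characterization-orthogonality}, which already identifies the relevant notions of fibrancy. Both $\nwbiratTspectraX$ and $\nplusoneorthogonalTspectraX$ are by construction left Bousfield localizations of the common underlying model category $\TspectraX$, so they share exactly the same class of cofibrations. By Proposition \ref{prop.characterization-orthogonality}(\ref{prop.characterization-orthogonality.a})$\Leftrightarrow$(\ref{prop.characterization-orthogonality.b}), they also share exactly the same class of fibrant objects.

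From here I would invoke the mapping-space characterization of local equivalences in \cite[Def. 3.1.4]{MR1944041}: in a left Bousfield localization of a model category $\mathcal A$ at a set $S$, a map $f$ is a weak equivalence if and only if, after cofibrant replacement in $\mathcal A$, the induced map on the homotopy function complexes $\mathrm{Map}(-, W)$ is a weak equivalence of pointed simplicial sets for every $S$-local object $W$. Since cofibrant replacement takes place entirely inside $\TspectraX$ (and is therefore independent of the particular localization) and since the classes of local objects coincide by the preceding paragraph, the two classes of weak equivalences must coincide as well. Hence the model structures on $\nwbiratTspectraX$ and $\nplusoneorthogonalTspectraX$ are identical, so the identity adjunction is trivially a Quillen equivalence.

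For the perfect field case the same argument goes through verbatim, once Proposition \ref{prop.characterization-orthogonality}(\ref{prop.characterization-orthogonality.a})$\Leftrightarrow$(\ref{prop.characterization-orthogonality.c}) is used in place of the $(a)\Leftrightarrow(b)$ equivalence to identify the fibrant objects of $\nbiratTspectraX$ with those of $\nplusoneorthogonalTspectraX$. Alternatively, one can combine Proposition \ref{prop.genericsmoothness-Quillenequiv}, which identifies the model structures on $\nwbiratTspectraX$ and $\nbiratTspectraX$, with the first half of the theorem to the same effect.

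Since all of the substantive content has been absorbed by Proposition \ref{prop.characterization-orthogonality}, no genuine obstacle remains and the theorem is essentially a repackaging statement. The only point deserving explicit care is the formal observation that two left Bousfield localizations of a common base with coinciding classes of local objects must agree as model structures; this is immediate from Hirschhorn's intrinsic definition of local equivalences in terms of mapping spaces into local objects, and requires nothing beyond the coincidence of cofibrations and of fibrant objects.
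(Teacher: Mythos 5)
Your proposal is correct and follows essentially the same route as the paper: both localizations share the cofibrations of $\TspectraX$, Proposition \ref{prop.characterization-orthogonality} identifies their fibrant (equivalently, local) objects, and the coincidence of weak equivalences then follows from detecting them via homotopy function complexes into local objects (the paper cites \cite[Thm. 9.7.4]{MR1944041} where you invoke the definition of local equivalences, but this is the same mechanism). The perfect field case is likewise handled as in the paper, either directly via the equivalence with condition (\ref{prop.characterization-orthogonality.c}) or through Proposition \ref{prop.genericsmoothness-Quillenequiv}.
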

\begin{proof}
	We show first that $\nwbiratTspectraX$ and $\nplusoneorthogonalTspectraX$ are Quillen equivalent.
	Since $\nwbiratTspectraX$, $\nplusoneorthogonalTspectraX$ are both left Bousfield localizations
	of $\TspectraX$, we deduce that they are simplicial model categories with the same cofibrant 
	replacement functor $Q$.  Thus, it suffices to show that they have the same class of
	weak equivalences.
	
	However,
	proposition \ref{prop.characterization-orthogonality} implies that $\nwbiratTspectraX$, and
	$\nplusoneorthogonalTspectraX$ also have the same
	class of fibrant objects. Therefore, it follows from \cite[Thm. 9.7.4]{MR1944041} that they 
	have exactly the same
	class of weak equivalences.
	
	Finally, if the base scheme is a perfect field, by proposition \ref{prop.genericsmoothness-Quillenequiv}
	we conclude that $\nwbiratTspectraX$ and $\nbiratTspectraX$ are Quillen equivalent.
\end{proof}
		
\begin{thm}
		\label{1st-characterization-slices}
	Let $E$ be fibrant in $\TspectraX$.
	Then $E$ is an $n$-slice (see definition \ref{def.n-birational}(\ref{def.n-birational.b})) 
	if and only if the following conditions hold:
		\begin{description}
			\item[S1]		$E$ is $n$-effective, i.e.
				 $E\in \neffstablehomotopyX$.						
			\item[S2]		$E$ is $n+1$-weakly
				 birational.
		\end{description}
	In addition, if the base scheme $X=\mathrm{Spec}\; k$, with $k$ a perfect field, then
	$E$ is an $n$-slice if and only if the following conditions hold:
		\begin{description}
			\item[GSS1]	$E$ is $n$-effective, i.e. $E\in \neffstablehomotopyX$.
			\item[GSS2]	$E$ is $n+1$-birational.
		\end{description}
\end{thm}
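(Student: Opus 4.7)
The strategy is to reduce the stated characterization to a classical one: an object $E \in \stablehomotopyX$ is an $n$-slice if and only if it is simultaneously $n$-effective and $(n+1)$-orthogonal, and then to invoke Theorem \ref{thm.orthogonal=wbirationalTspectra} to translate $(n+1)$-orthogonality into weak $(n+1)$-birationality (and, over a perfect field, into $(n+1)$-birationality). Since the equivalences between fibrancy in $\nplusoneorthogonalTspectraX$, $\nwbiratTspectraX$ and (over a perfect field) $\nbiratTspectraX$ have already been established, most of the work is packaged into earlier results and only a short argument in the triangulated category $\stablehomotopyX$ remains.

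For the forward direction, assume $E \cong s_{n}E'$ in $\stablehomotopyX$ for some $E' \in \TspectraX$. The distinguished triangle
\[ f_{n+1}E' \xrightarrow{\rho_{n}^{E'}} f_{n}E' \xrightarrow{\pi_{n}^{E'}} s_{n}E' \longrightarrow S^{1}\wedge f_{n+1}E' \]
together with the fact that $\neffstablehomotopyX$ is triangulated and contains both $f_{n}E'$ and $f_{n+1}E'$ shows that $s_{n}E' \in \neffstablehomotopyX$, giving (S1). For (S2), I would use the characterizing property of $s_{n}$ recalled immediately before definition \ref{def.orthogonality}: $\Hom_{\stablehomotopyX}(K, s_{n}E') = 0$ for every $K \in \Sigma_{T}^{n+1}\stablehomotopyX^{\mathit{eff}}$; that is, $s_{n}E' \in \nplusoneorthogonalX$. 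Invoking Theorem \ref{thm.orthogonal=wbirationalTspectra} (which identifies fibrant objects in $\nplusoneorthogonalTspectraX$ with fibrant objects in $\nwbiratTspectraX$) then produces (S2) after replacing $E$ with a fibrant model if necessary; the hypothesis that $E$ is fibrant in $\TspectraX$ lets me skip that step.

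For the converse, assume $E$ is fibrant in $\TspectraX$, $n$-effective, and weakly $(n+1)$-birational. By Theorem \ref{thm.orthogonal=wbirationalTspectra}, $E$ is fibrant in $\nplusoneorthogonalTspectraX$, hence $(n+1)$-orthogonal. The $n$-effectivity gives that the counit $\theta_{n}^{E}\colon f_{n}E \to E$ is an isomorphism in $\stablehomotopyX$, because $f_{n}$ restricted to $\neffstablehomotopyX$ is naturally isomorphic to the identity (the adjunction $(i_{n}, r_{n})$ is the inclusion of a full triangulated subcategory). The $(n+1)$-orthogonality forces $f_{n+1}E \cong 0$: indeed $f_{n+1}E \in \Sigma_{T}^{n+1}\stablehomotopyX^{\mathit{eff}}$, so the adjunction gives $\Hom(f_{n+1}E, f_{n+1}E) \cong \Hom(f_{n+1}E, E) = 0$, whence $f_{n+1}E = 0$. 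Feeding this into the distinguished triangle above (with $E'$ replaced by $E$) yields $E \cong f_{n}E \cong s_{n}E$, so $E$ is an $n$-slice.

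For the perfect field statement, I would repeat the argument using the second Quillen equivalence in Theorem \ref{thm.orthogonal=wbirationalTspectra} (equivalently, Proposition \ref{prop.genericsmoothness-Quillenequiv}) to replace weak $(n+1)$-birationality by $(n+1)$-birationality throughout. The only conceptually nontrivial step is the triangulated computation $f_{n+1}E = 0$ from $(n+1)$-orthogonality, and this is standard once the adjunction $(i_{n+1}, r_{n+1})$ is in hand; all the genuine geometric content has been absorbed into Theorem \ref{thm.orthogonal=wbirationalTspectra}, so I do not expect any serious obstacle.
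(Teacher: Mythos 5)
Your proposal is correct and follows essentially the same route as the paper: both directions funnel all the real content through Proposition \ref{prop.characterization-orthogonality} (equivalently Theorem \ref{thm.orthogonal=wbirationalTspectra}), which converts fibrancy in $\nplusoneorthogonalTspectraX$ into weak $(n+1)$-birationality (and, over a perfect field, into $(n+1)$-birationality via Proposition \ref{prop.genericsmoothness-Quillenequiv}). The only cosmetic difference is that where the paper simply quotes Theorem \ref{thm.modelstructures-slicefiltration}(\ref{thm.modelstructures-slicefiltration.a}),(\ref{thm.modelstructures-slicefiltration.c}) to identify ``$n$-effective and fibrant in $\nplusoneorthogonalTspectraX$'' with ``is an $n$-slice'', you verify this identification by the standard triangulated/adjunction argument ($f_{n+1}E=0$ from orthogonality, $f_{n}E\cong E$ from effectivity, then the defining triangle gives $E\cong s_{n}E$), which is a perfectly sound substitute.
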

\begin{proof}
	Assume that $E$ is an $n$-slice.  Then 
	theorems	 \ref{thm.modelstructures-slicefiltration}(\ref{thm.modelstructures-slicefiltration.a}) 
	and \ref{thm.modelstructures-slicefiltration}(\ref{thm.modelstructures-slicefiltration.c}) imply
	that $E$ is $n$-effective and
	fibrant in $\nplusoneorthogonalTspectraX$.  Hence,
	proposition \ref{prop.characterization-orthogonality} implies
	that $E$ is also $n+1$-weakly birational.
	
	Now we assume that $E$ satisfies the conditions
	\textbf{S1} and \textbf{S2} above.  Then, proposition
	\ref{prop.characterization-orthogonality} implies that
	$E$ is fibrant in $\nplusoneorthogonalTspectraX$.
	Therefore, theorem \ref{thm.modelstructures-slicefiltration}(\ref{thm.modelstructures-slicefiltration.c})
	implies that $E$ is isomorphic in $\stablehomotopyX$
	to its own $n$-slice $s_{n}(E)$.
	
	Finally, if the base scheme is a perfect field, then by
	proposition \ref{prop.characterization-orthogonality} the conditions \textbf{S2}
	and \textbf{GSS2} are equivalent; hence we can conclude applying the same argument
	as above.
\end{proof}

\begin{rmk}
		\label{rmk.unstable.case}
	Notice that theorem \ref{thm.orthogonal=wbirationalTspectra}
	implies that it is possible to
	construct the slice filtration directly from the Quillen model categories $\nwbiratTspectraX$ described in
	definition \ref{def.localizationAmod-weaklyBirat} without
	making any reference to the effective categories $\Sigma _{T}^{q}\stablehomotopyX^{\mathit{eff}}$.  
	One of the interesting consequences of this fact is that it is possible to obtain 
	analogues of the slice filtration in the unstable setting, since the suspension with respect 
	to $T$ or $S^{1}$ does not play an essential role in
	the construction of $\nwbiratTspectraX$, i.e. we could consider the left Bousfield localization
	of the motivic unstable homotopy category $\mathcal M$ with respect to
	the maps in definition \ref{def.localizing-maps2}.  
	We will study the details of this construction in a future work.
\end{rmk}
				
\end{section}
\begin{section}{Some Computations}
		\label{sect-2}

	In this section we use the characterization of the slices obtained in theorem
	\ref{1st-characterization-slices} to describe the slices of projective spaces, Thom spaces and  blow ups.
	
	To simplify the notation, given a simplicial presheaf $K\in \mathcal M$ or a map $f\in \mathcal M$; let
	$s_{j}(K)$, $s_{j}(f)$ (respectively $s_{<j}(K)$, $s_{<j}(f)$) denote $s_{j}(F_{0}(K))$, $s_{j}(F_{0}(f))$
	(respectively $s_{<j}(F_{0}(K))$, $s_{<j}(F_{0}(f))$).
	
\begin{lem}
		\label{lemma.computingslices-via-birinvs}
	Let $g:E\rightarrow F$ be a map in $\stablehomotopyX$ such that
	$s_{<n}(g)$ and $s_{<n+1}(g)$ are both isomorphisms in $\stablehomotopyX$.
	Then the $n$-slice of $g$, $s_{n}(g)$ is also an isomorphism in $\stablehomotopyX$.
\end{lem}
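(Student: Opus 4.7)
The plan is to use the two defining distinguished triangles of the slice filtration together with the $5$-lemma for triangulated categories, applied twice. All relevant naturality is built into the construction because $f_q$, $s_{<q}$ and $s_q$ are triangulated functors (as recalled in Section~\ref{Introd}), so any map $g:E\to F$ fits into a morphism of distinguished triangles.

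First I would apply, for $q=n$ and $q=n+1$, the distinguished triangle
\[
\xymatrix{f_{q}E \ar[r]^-{\theta_{q}^{E}} & E \ar[r]^-{\pi_{<q}^{E}} & s_{<q}E \ar[r] & S^{1}\wedge f_{q}E}
\]
to the map $g$. This produces two morphisms of distinguished triangles whose middle vertical arrow is $g$ itself and whose right vertical arrow is $s_{<q}(g)$. By hypothesis, $s_{<n}(g)$ and $s_{<n+1}(g)$ are isomorphisms; the $5$-lemma in the triangulated category $\stablehomotopyX$ then yields that $f_{n}(g):f_{n}E\to f_{n}F$ and $f_{n+1}(g):f_{n+1}E\to f_{n+1}F$ are both isomorphisms.

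Next I would apply the second defining triangle
\[
\xymatrix{f_{n+1}E \ar[r]^-{\rho_{n}^{E}} & f_{n}E \ar[r]^-{\pi_{n}^{E}} & s_{n}E \ar[r] & S^{1}\wedge f_{n+1}E}
\]
to $g$, obtaining a morphism of distinguished triangles in which the left vertical map is $f_{n+1}(g)$ and the middle one is $f_{n}(g)$, both already shown to be isomorphisms. A final application of the $5$-lemma forces the right vertical map $s_{n}(g):s_{n}E\to s_{n}F$ to be an isomorphism as well, completing the proof.

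There is no real obstacle: the argument is a formal diagram chase in $\stablehomotopyX$, relying only on the triangulated structure and the fact that the slice functors are triangulated, so no computations involving the specific cellular/Bousfield-localization structure of $\TspectraX$ are needed.
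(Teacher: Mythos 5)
Your first application of the $5$-lemma does not work. In the morphism of triangles obtained by applying $g$ to $f_{q}E \to E \to s_{<q}E \to S^{1}\wedge f_{q}E$, the three vertical arrows are $f_{q}(g)$, $g$ and $s_{<q}(g)$, and you only know that \emph{one} of them, namely $s_{<q}(g)$, is an isomorphism; $g$ itself is not assumed to be one. The triangulated $5$-lemma needs two of the three legs to be isomorphisms, so you cannot conclude that $f_{n}(g)$ and $f_{n+1}(g)$ are isomorphisms. In fact that intermediate claim is false: take $E\in \nplusoneorthogonalTspectraX$... rather, take $E$ a nonzero object of $\Sigma_{T}^{n+1}\stablehomotopyX^{\mathit{eff}}$, $F=0$ and $g=0$. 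Then $s_{<n}E=s_{<n+1}E=0$, so $s_{<n}(g)$ and $s_{<n+1}(g)$ are isomorphisms, yet $f_{n}(g)\colon E\to 0$ is not. (The conclusion of the lemma still holds there, since $s_{n}E=0$, which shows it is exactly your route through the connective covers that breaks, not the statement.)

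The repair is to use the distinguished triangle that relates the three functors actually appearing in the statement, namely
\[
\xymatrix{s_{n}(E) \ar[r] & s_{<n+1}(E) \ar[r] & s_{<n}(E) \ar[r] & S^{1}\wedge s_{n}(E)}
\]
which is how the paper argues (citing Prop.\ 3.1.19 of the reference for the model-structure construction of the slice filtration); it can also be obtained formally from the octahedral axiom applied to the composite $f_{n+1}E\to f_{n}E\to E$, whose three cofibres are $s_{n}E$, $s_{<n+1}E$ and $s_{<n}E$. Naturality of this triangle in $E$ gives a morphism of distinguished triangles with vertical arrows $s_{n}(g)$, $s_{<n+1}(g)$, $s_{<n}(g)$, and now two of the three are isomorphisms by hypothesis, so the $5$-lemma does apply and yields that $s_{n}(g)$ is an isomorphism. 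So the overall shape of your argument (naturality plus the $5$-lemma) is right, but it must be run on this triangle rather than on the two defining triangles separately.
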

\begin{proof}
	It follows from \cite[Prop. 3.1.19]{MR2807904} that the rows in the following commutative diagram
	are distinguished triangles in $\stablehomotopyX$
		\[	\xymatrix{s_{n}(E) \ar[d]_-{s_{n}(g)} \ar[r] & s_{<n+1}(E)\ar[d]^-{s_{<n+1}(g)} \ar[r] & s_{<n}(E) 
							\ar[d]^-{s_{<n}(g)} \ar[r] & S^{1}\wedge s_{n}(E)\ar[d]^-{S^{1}\wedge s_{n}(g)} \\
							s_{n}(F) \ar[r]  & s_{<n+1}(F) \ar[r] & s_{<n}(F) \ar[r] & S^{1}\wedge s_{n}(F)}
		\]
	Thus, we conclude that $s_{n}(g)$ is also an isomorphism in $\stablehomotopyX$.
\end{proof}

	Consider $Y\in Sm_{X}$.  Let  $\mathbb P ^{n}(Y)$ denote the trivial projective bundle of rank $n$ over $Y$, and
	let $\mathbb P ^{\infty}(Y)$ denote the colimit in $\mathcal M$ of the
	following filtered diagram
		\begin{equation}
				\label{eq.inf.proj.space}
			\mathbb P^{0}(Y)\rightarrow \mathbb P ^{1}(Y)\rightarrow \cdots \rightarrow 
			\mathbb P^{n}(Y)\rightarrow \cdots
		\end{equation}
	given by the inclusions of the respective hyperplanes at infinity.  
	
\begin{thm}
		\label{thm.slices.proj.space}
	Let $Y\in Sm_{X}$.  Then
	for any integer $j\leq n$, the diagram \ref{eq.inf.proj.space} induces the following
	isomorphisms in $\stablehomotopyX$
					\[	\xymatrix@C=1.5pc@R=0.5pc{s_{j}(\mathbb P^{n}(Y)_{+}) \ar[r]^-{\cong} & 
							s_{j}(\mathbb P ^{n+1}(Y)_{+}) 
							\ar[r]^-{\cong} & \cdots 
							 \ar[r]^-{\cong}& s_{j}(\mathbb P ^{\infty} (Y)_{+}) }
					\]
\end{thm}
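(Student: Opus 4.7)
The strategy is to show that for every $m$ with $n\leq m\leq \infty$, the homotopy cofibre of $F_{0}(\mathbb P ^{n}(Y)_{+}) \to F_{0}(\mathbb P ^{m}(Y)_{+})$ lies in $\Sigma _{T}^{n+1}\stablehomotopyX^{\mathit{eff}}$. Once this is in hand, for any $j\leq n$ and any $C \in \Sigma _{T}^{n+1}\stablehomotopyX^{\mathit{eff}}$, one has $f_{j}(C) = f_{j+1}(C) = C$ (since $C$ already lies in $\Sigma _{T}^{q}\stablehomotopyX^{\mathit{eff}}$ for every $q\leq n+1$), and the defining triangle of the $j$-slice forces $s_{j}(C)=0$. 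Applying the triangulated functor $s_{j}$ to the cofibre triangle then yields the claimed isomorphism.

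For the finite step, the cofibre of $\mathbb P ^{n}(Y)_{+} \to \mathbb P ^{n+1}(Y)_{+}$ in $\mathcal M$ should be identified with $S^{n+1}\wedge \Gm ^{n+1}\wedge Y_{+}$. Indeed, $\mathbb P ^{n+1}$ is a homotopy pushout attaching the top cell $\mathbb A ^{n+1}$ to $\mathbb P ^{n}$ along the linear projection $\mathbb A ^{n+1}\setminus \{0\}\to \mathbb P ^{n}$ from the origin; hence the cofibre reduces to $\mathbb A ^{n+1}/(\mathbb A ^{n+1}\setminus \{0\})$, which by Morel--Voevodsky homotopy purity for the rational point $\{0\}\hookrightarrow \mathbb A ^{n+1}$ (an iterated form of Lemma \ref{lemma.smashGm=cofibre}, used analogously in the proof of Lemma \ref{lemma.characterizing-cofibres}) is weakly equivalent to $S^{n+1}\wedge \Gm ^{n+1}$. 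Smashing with $Y_{+}$ and applying the left Quillen functor $F_{0}$, the cofibre becomes $F_{0}(S^{n+1}\wedge \Gm ^{n+1}\wedge Y_{+})$, which belongs to $\Sigma _{T}^{n+1}\stablehomotopyX^{\mathit{eff}}$ directly from the defining family $C^{n+1}_{\mathit{eff}}$.

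For the case $m=\infty$, $F_{0}(\mathbb P ^{\infty}(Y)_{+})$ is the sequential homotopy colimit of the $F_{0}(\mathbb P ^{m}(Y)_{+})$ in $\TspectraX$, since $F_{0}$ is a left adjoint and each $\mathbb P ^{m}(Y)_{+} \to \mathbb P ^{m+1}(Y)_{+}$ is a cofibration. The cofibre of $F_{0}(\mathbb P ^{n}(Y)_{+}) \to F_{0}(\mathbb P ^{\infty}(Y)_{+})$ is therefore the homotopy colimit of the finite cofibres, each of which is an iterated extension of pieces $F_{0}(S^{k}\wedge \Gm ^{k}\wedge Y_{+})$ with $k>n$ coming from the finite step, and all such pieces lie in $\Sigma _{T}^{n+1}\stablehomotopyX^{\mathit{eff}}$. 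Since this subcategory is triangulated and closed under arbitrary coproducts (and sequential homotopy colimits in a triangulated category are realised as cofibres of shift-minus-identity maps on direct sums), the total cofibre remains $(n+1)$-effective, and the slice-vanishing principle concludes.

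The main delicate point is this last step: justifying both that the cofibre into $F_{0}(\mathbb P ^{\infty}(Y)_{+})$ is a sequential hocolim of finite cofibres, and that such hocolims preserve $(n+1)$-effectivity. The first follows from $F_{0}$ being a left Quillen functor compatible with colimits of cofibration sequences; the second rests on the closure of $\Sigma _{T}^{n+1}\stablehomotopyX^{\mathit{eff}}$ under arbitrary coproducts, which is built into its definition as the smallest full triangulated subcategory containing $C^{n+1}_{\mathit{eff}}$ and closed under arbitrary coproducts.
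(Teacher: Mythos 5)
Your argument is correct, but it takes a genuinely different route from the paper. You prove the statement by cellular induction: the cofibre of $\mathbb P^{m}(Y)_{+}\to\mathbb P^{m+1}(Y)_{+}$ is identified (via the Zariski distinguished square covering $\mathbb P^{m+1}$ by the affine chart and the complement of a point, plus purity for the trivial bundle, i.e.\ \cite[Prop. 2.17(2)]{MR1813224}) with $S^{m+1}\wedge\Gm^{m+1}\wedge Y_{+}$, so the cofibre of $F_{0}(\mathbb P^{n}(Y)_{+})\to F_{0}(\mathbb P^{m}(Y)_{+})$ (and, by closure of $\Sigma_{T}^{n+1}\stablehomotopyX^{\mathit{eff}}$ under coproducts and sequential homotopy colimits, also for $m=\infty$) lies in $\Sigma_{T}^{n+1}\stablehomotopyX^{\mathit{eff}}$; the conclusion then follows from the standard fact that $s_{j}$ vanishes on $(j+1)$-effective objects, since $f_{j+1}C\to f_{j}C$ is an isomorphism there. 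This is valid; the only points needing (routine) care are the octahedral induction for the iterated extensions, the realization of the sequential hocolim as the cofibre of $1-\mathrm{shift}$ on a coproduct, and that $F_{0}$ applied to the colimit defining $\mathbb P^{\infty}(Y)$ is a homotopy colimit of cofibrations. The paper argues differently and without any cell decomposition: for $k>n$ it factors $\lambda^{k}_{n}:\mathbb P^{n}(Y)\to\mathbb P^{k}(Y)$ through the open complement $U_{k,n}$ of a linear $\mathbb P^{k-n-1}(Y)$, which is a vector bundle over $\mathbb P^{n}(Y)$; homotopy invariance handles the zero section, the open immersion $v^{k}_{n}$ has closed complement of codimension $n+1$, so theorems \ref{thm.modelstructures-slicefiltration}(\ref{thm.modelstructures-slicefiltration.b}) and \ref{thm.orthogonal=wbirationalTspectra} give that $s_{<j}(\lambda^{k}_{n})$ is an isomorphism for $j\leq n+1$, and lemma \ref{lemma.computingslices-via-birinvs} converts the two $s_{<j}$-isomorphisms into an $s_{j}$-isomorphism, with $\mathbb P^{\infty}$ handled by commutation of slices with filtered homotopy colimits. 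What each buys: your proof is more elementary and self-contained, using only the classical definition of the slice filtration and none of the paper's birational machinery; the paper's proof is designed to showcase that machinery and, more importantly, only needs a codimension bound on a closed complement rather than an explicit cell structure, which is why the same pattern extends immediately to the Thom space and blow-up computations (theorems \ref{thm.slices-thom} and \ref{thm.blowups}), where no cellular decomposition is available.
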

\begin{proof}
	Let $k>n$, and consider the closed embedding induced by the diagram (\ref{eq.inf.proj.space})
	$\lambda ^{k}_{n}:\mathbb P^{n}(Y)\rightarrow \mathbb P^{k}(Y)$.  It is possible to choose a linear embedding
	$\mathbb P^{k-n-1}(Y)\rightarrow \mathbb P^{k}(Y)$ such that its open complement $U_{k,n}$ contains
	$\mathbb P^{n}(Y)$ and has the structure of a vector bundle over $\mathbb P^{n}(Y)$, with zero section
	$\sigma ^{k}_{n}$:
		\[ \xymatrix{ U_{k,n} \ar[r]^-{v^{k}_{n}} \ar@<-1ex>[d]& \mathbb P ^{k}(Y) & 
						\mathbb P ^{k-n-1}(Y) \ar[l]\\
						\mathbb P ^{n}(Y) \ar@<-1ex>[u]_-{\sigma ^{k}_{n}} \ar@/_1pc/[ur]_-{\lambda ^{k}_{n}} & &}
		\]
	By homotopy invariance $s_{<j}(\sigma ^{k}_{n})$ is an isomorphism in $\stablehomotopyX$
	for every integer $j$.
	On the other hand, if $j\leq n$, then $F_{0}(v ^{k}_{n})$ is a weak equivalence in $\jwbiratTspectraX$ since
	the codimension of its closed complement is $n+1$.  Thus,
	theorems \ref{thm.modelstructures-slicefiltration}(\ref{thm.modelstructures-slicefiltration.b}) and
	\ref{thm.orthogonal=wbirationalTspectra}
	imply that  if $j\leq n+1$, then $s_{<j}(v ^{k}_{n})$ is also an isomorphism in $\stablehomotopyX$.
	
	Therefore, $s_{<j}(\lambda ^{k}_{n})=s_{<j}(v^{k}_{n})\circ s_{<j}(\sigma ^{k}_{n})$ 
	is an isomorphism in $\stablehomotopyX$
	for $j\leq n+1$; and using lemma \ref{lemma.computingslices-via-birinvs} we conclude that the induced map
	on the slices $s_{j}(\lambda ^{k}_{n})$ is also an isomorphism for $j\leq n$.
	
	Finally, the result for $\mathbb P ^{\infty}(Y)$ follows directly from the fact that the slices commute
	with filtered homotopy colimits.
\end{proof}

	Let $H\mathbb Z$ denote Voevodsky's Eilenberg-MacLane spectrum (see \cite[section 6.1]{MR1648048})
	representing motivic cohomology in $\stablehomotopyX$.

\begin{cor}
		\label{cor.zero.slice.pinfty}
	Assume that the base scheme $X=\mathrm{Spec}\; k$, with $k$ a perfect field.  Then, in the following diagram 
	all the symmetric $T$-spectra are isomorphic to $H\mathbb Z$:
		\[	\xymatrix@C=1.5pc@R=0.5pc{H\mathbb Z \ar[r]^-{\cong} & s_{0}(\mathbb P^{0}(k)_{+}) \ar[r]^-{\cong} & 
							s_{0}(\mathbb P ^{1}(k)_{+}) 
							\ar[r]^-{\cong} & \cdots  \\
							 \cdots \ar[r]^-{\cong} & s_{0}(\mathbb P^{n}(k)_{+}) 
							\ar[r]^-{\cong} &\cdots 
							 \ar[r]^-{\cong}& s_{0}(\mathbb P ^{\infty} (k)_{+}) }
					\]
\end{cor}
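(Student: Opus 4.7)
The plan is to combine Theorem~\ref{thm.slices.proj.space} with the known identification of the zeroth slice of the motivic sphere spectrum.

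First, I would apply Theorem~\ref{thm.slices.proj.space} taking $Y = \mathrm{Spec}\, k = X$ and $j = 0$. The hypothesis $j \leq n$ is satisfied for every $n \geq 0$, so the theorem directly yields the required chain of isomorphisms
\[ s_0(\mathbb P^0(k)_+) \xrightarrow{\cong} s_0(\mathbb P^1(k)_+) \xrightarrow{\cong} \cdots \xrightarrow{\cong} s_0(\mathbb P^\infty(k)_+), \]
with the terminal case $\mathbb P^\infty(k)_+$ being covered by the same theorem since $s_0$ commutes with filtered homotopy colimits (as used at the end of the proof of Theorem~\ref{thm.slices.proj.space}).

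Next, I would identify the initial term $s_0(\mathbb P^0(k)_+)$ with $H\mathbb Z$. By definition $\mathbb P^0(k) = \mathrm{Spec}\, k = X$, so the pointed simplicial presheaf $\mathbb P^0(k)_+$ coincides with $X_+ = S^0$ in the notation of Section~\ref{Introd}. Hence, using that $F_0$ is the infinite suspension functor,
\[ F_0(\mathbb P^0(k)_+) \;=\; F_0(S^0) \;=\; \Sigma_T^{\infty} S^0 \;=\; \mathbf{1}, \]
the motivic sphere spectrum, and the corollary reduces to the statement $s_0(\mathbf{1}) \cong H\mathbb Z$.

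The main (and only non-formal) obstacle is this last identification, which is Voevodsky's conjecture on the zeroth slice of the motivic sphere spectrum. Since the base scheme is the spectrum of a perfect field, one invokes Levine's theorem to conclude. All the other steps are a direct specialization of the computation of the slices of projective spaces carried out in Theorem~\ref{thm.slices.proj.space} and the unravelling of the definitions from Section~\ref{Introd}.
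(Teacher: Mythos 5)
Your proposal is correct and matches the paper's own argument: the paper likewise deduces the corollary from Theorem \ref{thm.slices.proj.space} together with the Levine--Voevodsky identification of the zero slice of the sphere spectrum over a perfect field. Your unravelling of $\mathbb P^{0}(k)_{+}=S^{0}$ and $F_{0}(S^{0})=\mathbf{1}$ just makes explicit what the paper leaves implicit.
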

\begin{proof}
	This follows immediately from theorem \ref{thm.slices.proj.space} together with the computation of
	Levine \cite[Thm. 10.5.1]{MR2365658} and 
	Voevodsky \cite{MR2101286} for the zero slice of the sphere spectrum.
\end{proof}
	
\begin{thm}
		\label{thm.slices-thom}
	Let $\iota _{U,Y}\in WB_{n}$, $\pi :V\rightarrow Y$ a vector bundle of rank $r$
	together with a trivialization $t:\pi ^{-1}(U)\rightarrow \mathbb A ^{r}_{U}$ of its restriction to $U$.
	Then for every integer $j\leq n$, there exists an isomorphism in $\stablehomotopyX$
	(see definition \ref{def.cofibres}(\ref{def.cofibres.b}))
		\[	s_{j}(Th(V))\cong S^{r}\wedge \Gm ^{r}\wedge s_{j-r}(Y_{+})
		\]
\end{thm}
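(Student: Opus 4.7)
The plan is to use birational invariance of the slices $s_{j}$ for $j\leq n$ to reduce $Th(V)$ to the Thom space of the trivial bundle over $U$, and then to transport the answer from $U$ back to $Y$ along $\iota _{U,Y}$.

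Set $Z=Y\backslash U$ and $V_{0}=\pi ^{-1}(U)$. Besides $\iota _{U,Y}$ itself, the open immersions $V_{0}\hookrightarrow V$ (closed complement $\pi ^{-1}(Z)$, a vector bundle over the smooth $Z$, of codimension $\geq n+1$ in the smooth irreducible $V$) and $V_{0}\backslash \sigma _{0}(U)\hookrightarrow V\backslash \sigma _{0}(Y)$ (closed complement $\pi ^{-1}(Z)\backslash \sigma _{0}(Z)$, open in $\pi ^{-1}(Z)$ and hence smooth, still of codimension $\geq n+1$) both belong to $WB_{n}$. For any such $\iota \in WB_{n}$, $F_{0}(\iota )\in sWB_{m}$ for every $m\leq n$ (take $b=p=0$ and $r=n$ in definition \ref{def.localizationAmod-weaklyBirat}), so theorem \ref{thm.orthogonal=wbirationalTspectra} makes it a weak equivalence in $L_{<q}\TspectraX$ for every $q\leq n+1$. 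Therefore $s_{<q}(F_{0}(\iota ))$ is an isomorphism in $\stablehomotopyX$ for every $q\leq n+1$, and lemma \ref{lemma.computingslices-via-birinvs} promotes this to $s_{j}(F_{0}(\iota ))$ being an isomorphism for every $j\leq n$.

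Next, I would consider the map of homotopy cofibre sequences in $\mathcal M$ induced by the above open immersions of pairs
	\[ (V_{0}\backslash \sigma _{0}(U))_{+}\to V_{0+}\to Th(V_{0}),\qquad (V\backslash \sigma _{0}(Y))_{+}\to V_{+}\to Th(V). \]
Applying $F_{0}$ produces a map of distinguished triangles in $\stablehomotopyX$; by the previous paragraph, $s_{j}$ sends both the left and middle vertical arrows to isomorphisms for $j\leq n$, so the triangulated structure forces $s_{j}(Th(V_{0}))\xrightarrow{\sim} s_{j}(Th(V))$ as well. The trivialization $t$ together with \cite[Prop. 2.17(2)]{MR1813224} identifies $Th(V_{0})\cong S^{r}\wedge \Gm ^{r}\wedge U_{+}$ in $\stablehomotopyX$, so I obtain
	\[ s_{j}(Th(V))\;\cong\; s_{j}(S^{r}\wedge \Gm ^{r}\wedge U_{+}). \]

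To finish, I would invoke the standard compatibility $s_{j}(S^{r}\wedge \Gm ^{r}\wedge A)\cong S^{r}\wedge \Gm ^{r}\wedge s_{j-r}(A)$ between the slice filtration and Tate suspension (a basic property of $s_{j}$, cf.\ \cite{MR2807904}) together with the isomorphism $s_{j-r}(F_{0}(U_{+}))\cong s_{j-r}(F_{0}(Y_{+}))$ from the first paragraph, which applies because $j-r\leq j\leq n$. This yields the desired identification $s_{j}(Th(V))\cong S^{r}\wedge \Gm ^{r}\wedge s_{j-r}(Y_{+})$. The delicate point is the geometric bookkeeping in paragraph one: one must verify that $\pi ^{-1}(Z)\backslash \sigma _{0}(Z)$ is closed in $V\backslash \sigma _{0}(Y)$, is smooth, and has the expected codimension, and that irreducibility of $Y$ transfers to $V$ and $V\backslash \sigma _{0}(Y)$ — all routine, but required to legitimate the $WB_{n}$-membership that drives the whole argument.
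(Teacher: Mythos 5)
Your proposal is correct and follows essentially the same route as the paper's proof: restrict the Thom-space cofibre sequence along $\iota_{U,Y}$, note the induced open immersions on $V$ and $V\backslash\sigma_{0}(Y)$ lie in $WB_{n}$, use theorem \ref{thm.orthogonal=wbirationalTspectra} with lemma \ref{lemma.computingslices-via-birinvs} to get slice isomorphisms, then trivialize over $U$ and apply \cite[Prop. 2.17(2)]{MR1813224} and the Tate-suspension compatibility of $s_{j}$. The only (immaterial) difference is that you apply $s_{j}$ and the five lemma directly, where the paper first checks $s_{<j}(\gamma)$ is an isomorphism for $j\leq n+1$ and then invokes lemma \ref{lemma.computingslices-via-birinvs}.
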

\begin{proof}
	Let $Z\in Sm_{X}$ be the closed complement of $\iota _{U,Y}$.
	Consider the following diagram in $Sm_{X}$, where all the squares are cartesian
		\[	\xymatrix{\pi ^{-1}(Z)\cap (V\backslash \sigma _{0}(Y)) \ar[r] \ar[d] & V\backslash \sigma _{0}(Y) 
						\ar[d] & \pi ^{-1}(U) \cap (V\backslash \sigma _{0}(Y)) \ar[l]_-{\beta} \ar[d] \\
						\pi ^{-1}(Z) \ar[r] \ar[d]& V \ar[d]^-{\pi} & \pi ^{-1}(U) \ar[l]_-{\alpha} \ar[d] \\
						Z \ar[r] & Y & U \ar[l]^-{\iota _{U,Y}}}
		\]
	and let $\gamma:Th(\pi ^{-1}(U))\rightarrow Th(V)$ be the induced map between the corresponding Thom spaces.
	We observe that $\alpha, \beta$ also belong to $WB_{n}$; thus, if $j\leq n$ we conclude that
	$F_{0}(\iota _{U,Y}), F_{0}(\alpha ), F_{0}(\beta )$ are all weak equivalences in $\jwbiratTspectraX$.
	Therefore, theorems \ref{thm.modelstructures-slicefiltration}(\ref{thm.modelstructures-slicefiltration.b}) and
	\ref{thm.orthogonal=wbirationalTspectra}
	imply that  if $j\leq n+1$, then $s_{<j}(\iota _{U,Y})$, $s_{<j}(\alpha)$, $s_{<j}(\beta)$ 
	are isomorphisms in $\stablehomotopyX$.
	We claim that if $j\leq n+1$, then
		\[ s_{<j}(\gamma): s_{<j}(Th(\pi^{-1}(U))) \rightarrow s_{<j}(Th(V))
		\]
	is also an isomorphism in $\stablehomotopyX$.  In effect,
	by construction of the Thom spaces, we deduce that for any integer $j\in \mathbb Z$,
	the rows in the following commutative diagram in $\stablehomotopyX$ are in fact distinguished triangles
		\[	\xymatrix{s_{<j}((\pi ^{-1}(U)\cap (V\backslash \sigma _{0}(Y)))_{+}) \ar[r] \ar[d]_-{s_{<j}(\beta)} & 
			s_{<j}(\pi ^{-1}(U)_{+}) \ar[r] \ar[d]_-{s_{<j}(\alpha)} & s_{<j}(Th(\pi ^{-1}(U))) 
			\ar[d]_-{s_{<j}(\gamma)} \\
			s_{<j}((V\backslash \sigma _{0}(Y))_{+}) \ar[r] & s_{<j}(V_{+}) \ar[r]
			& s_{<j}(Th(V))}
		\]
	Since $s_{<j}(\alpha), s_{<j}(\beta)$ are isomorphisms in $\stablehomotopyX$ for $j\leq n+1$,
	we conclude that for $j\leq n+1$, $s_{<j}(\gamma)$ is also an isomorphism in $\stablehomotopyX$.
	
	Thus, lemma \ref{lemma.computingslices-via-birinvs} implies that for $j\leq n$, 
	$s_{j}(\iota _{U,Y}), s_{j}(\gamma )$ are isomorphisms in $\stablehomotopyX$.
	Now, we use the trivialization $t$ to obtain the following commutative diagram in $Sm_{X}$ where the
	rows are isomorphisms
		\[	\xymatrix{\mathbb A ^{r}_{U}\backslash U  \ar[d] & 
						& \pi ^{-1}(U) \cap (V\backslash \sigma _{0}(Y)) \ar[ll]_-{\tilde{t}}^-{\cong}\ar[d]\\
						\mathbb A ^{r}_{U}  \ar[dr]& & \pi ^{-1}(U) \ar[dl]^-{\pi _{U}} \ar[ll]_-{t}^-{\cong}\\
						& U &}
		\]
	The same argument as above, shows that for every integer $j\in \mathbb Z$, there is an isomorphism 
	in $\stablehomotopyX$
		\[ s_{j}(\bar{t}):s_{j}(Th(\pi ^{-1}(U)))
			\rightarrow s_{j}(Th(\mathbb A ^{r}_{U}))
		\]
	On the other hand, \cite[Prop. 2.17(2)]{MR1813224} implies that there is a weak equivalence
	$w:F_{0}(Th(\mathbb A ^{r}_{U})) \rightarrow S^{r}\wedge \Gm ^{r}\wedge F_{0}(U_{+})$
	in $\TspectraX$.  Thus, for $j\leq n$ there exist isomorphisms in $\stablehomotopyX$
		\[	\xymatrix{s_{j}(Th(\pi ^{-1}(U))) \ar[r]^-{s_{j}(\bar{t})} \ar[d]_-{s_{j}(\gamma)}& 
							s_{j}(Th(\mathbb A ^{r}_{U})) \ar[d]^-{s_{j}(w)}\\
							s_{j}(Th(V))  &s_{j}(S^{r}\wedge \Gm ^{r}\wedge U_{+})}
		\]
	However, there exists a canonical isomorphism in $\stablehomotopyX$
		\[	s_{j}(S^{r}\wedge \Gm ^{r}\wedge U_{+})\rightarrow S^{r}\wedge \Gm ^{r}\wedge s_{j-r}(U_{+})
		\]
	Finally, we conclude by using the isomorphism $s_{j-r}(\iota _{U,Y})$ (notice that if $j\leq n$ then certainly
	$j-r\leq n$, since $r\geq 0$).
\end{proof}

\begin{cor}
		\label{cor.slices-thom}
	Assume that the base scheme $X=\mathrm{Spec}\; k$, with $k$ a perfect field.
	Let $\iota _{U,Y}\in B_{n}$, $\pi :V\rightarrow Y$ a vector bundle of rank $r$
	together with a trivialization $t:\pi ^{-1}(U)\rightarrow \mathbb A ^{r}_{U}$ of its restriction to $U$.
	Then for every integer $j\leq n$, there exists an isomorphism in $\stablehomotopyX$
		\[	s_{j}(Th(V))\cong S^{r}\wedge \Gm ^{r}\wedge s_{j-r}(Y_{+})
		\]
\end{cor}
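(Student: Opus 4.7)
The plan is simple: the proof of Theorem \ref{thm.slices-thom} applies verbatim with $B_n$ in place of $WB_n$, the bridge being Proposition \ref{prop.genericsmoothness-Quillenequiv}. That proposition, applied for each integer $j\geq 0$, yields a Quillen equivalence between $\jwbiratTspectraX$ and $Spt(B_{j}\mathcal M)$ whose left and right adjoints are both the identity; so over a perfect field the two left Bousfield localizations share the same class of weak equivalences, and in particular the same class of fibrant objects.

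Concretely, I would reproduce the cartesian diagram used in the proof of Theorem \ref{thm.slices-thom}, involving $\iota_{U,Y}$, the vector bundle $\pi:V\rightarrow Y$, the open immersions $\alpha:\pi^{-1}(U)\rightarrow V$ and $\beta:\pi^{-1}(U)\cap(V\backslash\sigma_{0}(Y))\rightarrow V\backslash\sigma_{0}(Y)$, and the induced map $\gamma:Th(\pi^{-1}(U))\rightarrow Th(V)$. The case $r=0$ is trivial ($Th(V)=Y_{+}$ and the formula tautologizes), so I may assume $r\geq 1$. Then $V$ and $V\backslash\sigma_{0}(Y)$ are smooth and irreducible over $X$, and the faithful flatness of $\pi$ guarantees that $\pi^{-1}(Z)$ has the same codimension in $V$ as $Z$ has in $Y$, and similarly for $\pi^{-1}(Z)\backslash\sigma_{0}(Z)\subset V\backslash\sigma_{0}(Y)$. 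Hence $\alpha,\beta\in B_{n}$, exactly like $\iota_{U,Y}$.

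For $j\leq n$ this forces $F_{0}(\iota_{U,Y}), F_{0}(\alpha), F_{0}(\beta)\in sB_{j}$, so they are weak equivalences in $Spt(B_{j}\mathcal M)$ by construction of the Bousfield localization, and by Proposition \ref{prop.genericsmoothness-Quillenequiv} also weak equivalences in $\jwbiratTspectraX$. From this point the argument is identical to the proof of Theorem \ref{thm.slices-thom}: theorems \ref{thm.modelstructures-slicefiltration}(\ref{thm.modelstructures-slicefiltration.b}) and \ref{thm.orthogonal=wbirationalTspectra} upgrade this to the statement that $s_{<j}$ is an isomorphism on each of $\iota_{U,Y},\alpha,\beta$ for $j\leq n+1$; the cofibre sequences defining the Thom spaces pass this to $s_{<j}(\gamma)$; lemma \ref{lemma.computingslices-via-birinvs} promotes the conclusion to $s_{j}$ for $j\leq n$; and the trivialization $t$ together with the Morel--Voevodsky identification of $F_{0}(Th(\mathbb A^{r}_{U}))$ with $S^{r}\wedge\Gm^{r}\wedge F_{0}(U_{+})$ closes the calculation, after one final use of the isomorphism $s_{j-r}(\iota_{U,Y})$ (noting that $j\leq n$ and $r\geq 0$ imply $j-r\leq n$).

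The only mild obstacle is the bookkeeping step of verifying that $\alpha$ and $\beta$ actually lie in $B_{n}$, which reduces to standard facts about preservation of codimension and irreducibility under vector bundle pullback. No new input beyond Proposition \ref{prop.genericsmoothness-Quillenequiv} is required.
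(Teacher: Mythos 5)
Your proposal is correct and follows essentially the same route as the paper, whose proof of this corollary is exactly the two-line observation that Proposition \ref{prop.genericsmoothness-Quillenequiv} makes $F_{0}(\iota_{U,Y})$ (and likewise $F_{0}(\alpha)$, $F_{0}(\beta)$) a weak equivalence in $\jwbiratTspectraX$ for $j\leq n$, after which the argument of Theorem \ref{thm.slices-thom} applies verbatim. Your added bookkeeping---checking $\alpha,\beta\in B_{n}$ via flat pullback of codimension and irreducibility, and disposing of the trivial case $r=0$---is just the detail the paper leaves implicit in ``the same argument.''
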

\begin{proof}
	Proposition \ref{prop.genericsmoothness-Quillenequiv} implies that $F_{0}(\iota _{U,Y})$ is a
	weak equivalence in $\jwbiratTspectraX$ for $j\leq n$.  Hence, the result follows using exactly
	the same argument as in theorem \ref{thm.slices-thom}.
\end{proof}

	Given a closed embedding $Z\rightarrow Y$ of smooth schemes over $X$, let $\blowup _{Z}Y$
	denote the blowup of $Y$ with center in $Z$.

\begin{thm}
		\label{thm.blowups}
	Let $\iota _{U,Y}\in WB_{n}$, and $j\in \mathbb Z$ an arbitrary integer.  
	Consider the following cartesian square in $Sm_{X}$
		\begin{equation}
				\label{eq.blowup}
			\begin{array}{c}
			\xymatrix{ D \ar[d]_-{q} \ar[r]^-{d}& \blowup _{Z}Y \ar[d]^-{p} & U \ar[l]_-{u} 
						\ar@{=}[d] \\
						 Z \ar[r]_-{i} & Y & U \ar[l]^-{\iota _{U,Y}}}
			\end{array}
		\end{equation}
	and let $q_{j}, d_{j}, p_{j}, i_{j}$ denote $s_{j}(q), s_{j}(d), s_{j}(p), s_{j}(i)$
	respectively.  Then the cartesian square (\ref{eq.blowup})
	induces the following distinguished triangle in $\stablehomotopyX$
		\begin{equation}
				\label{eq.blowuptriangle}
			\xymatrix{s_{j}(D_{+}) \ar[r]^-{\binom {-d_{j}} {q_{j}}} &
						s_{j}(\blowup _{Z}Y_{+})\oplus s_{j}(Z_{+}) 
						\ar[r]^-{(p_{j}, i_{j})} & s_{j}(Y_{+})}
		\end{equation}
	If $j\leq n$, then $s_{j}(\iota _{U,Y})$ is an isomorphism in $\stablehomotopyX$,
	and the following distinguished triangles in $\stablehomotopyX$ split
		\begin{align}
				\label{eq.split-blowuptriangle}
			\xymatrix{s_{j}(D_{+}) \ar@<1ex>[r]^-{\binom {-d_{j}} {q_{j}}} &
						s_{j}(\blowup _{Z}Y_{+})\oplus s_{j}(Z_{+}) 
						\ar@<1ex>[l] \ar@<1ex>[r]^-{(p_{j}, i_{j})} & 
						s_{j}(Y_{+}) \ar@<1ex>[l]^-{\binom {r_{j}} {0}}} \\
				\label{eq.second.split-blowuptriangle}
			\xymatrix{s_{j}(Y_{+}) \ar@<1ex>[r]^-{r_{j}} &
						s_{j}(\blowup _{Z}Y_{+})
						\ar@<1ex>[l]^-{p_{j}} \ar@<1ex>[r] & 
						s_{j}(Th(\mathcal O _{D}(1))) \ar@<1ex>[l]}
		\end{align}
	where $r_{j}=s_{j}(u)\circ (s_{j}(\iota _{U,Y}))^{-1}$, and
	$\mathcal O_{D}(1)$ denotes the canonical line bundle of the projective bundle
	$q:D\rightarrow Z$.
\end{thm}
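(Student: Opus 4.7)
My plan is to establish the distinguished triangle (\ref{eq.blowuptriangle}) by invoking the Morel--Voevodsky blow-up theorem: since $i:Z\to Y$ is a closed embedding of smooth $X$-schemes, the Cartesian square (\ref{eq.blowup}) is a homotopy pushout in $\mathcal{M}$. Applying $F_{0}$ produces a homotopy cocartesian square in $\TspectraX$, whose associated Mayer--Vietoris cofiber sequence $D_{+}\to \blowup_{Z}Y_{+}\oplus Z_{+}\to Y_{+}$ becomes, after applying the triangulated functor $s_{j}$, precisely the distinguished triangle (\ref{eq.blowuptriangle}) with the indicated maps.

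I would next prove that $s_{j}(\iota_{U,Y})$ is an isomorphism for every $j\le n$. With $b=p=0$ and $r=n$ the defining condition $b-p\ge j-r$ of $sWB_{j}$ (see definition \ref{def.localizationAmod-weaklyBirat}) reduces to $j\le n$, so $F_{0}(\iota_{U,Y})\in sWB_{j}$ in that range. Theorems \ref{thm.modelstructures-slicefiltration}(\ref{thm.modelstructures-slicefiltration.b}) and \ref{thm.orthogonal=wbirationalTspectra} then force $s_{<j+1}(\iota_{U,Y})$ to be an isomorphism for $j\le n$, and by the same reasoning $s_{<j}(\iota_{U,Y})$ is an isomorphism for $j\le n+1$; lemma \ref{lemma.computingslices-via-birinvs} therefore yields the asserted isomorphism for $j\le n$.

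Given this, the splitting of (\ref{eq.split-blowuptriangle}) is formal: the right square of (\ref{eq.blowup}) gives $p\circ u=\iota_{U,Y}$, so $r_{j}=s_{j}(u)\circ s_{j}(\iota_{U,Y})^{-1}$ satisfies $p_{j}\circ r_{j}=\mathrm{id}$, and hence $\binom{r_{j}}{0}:s_{j}(Y_{+})\to s_{j}(\blowup_{Z}Y_{+})\oplus s_{j}(Z_{+})$ is a right inverse to $(p_{j},i_{j})$, which forces the triangle (\ref{eq.blowuptriangle}) to split.

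For the triangle (\ref{eq.second.split-blowuptriangle}) I would apply the Morel--Voevodsky homotopy purity theorem to the smooth closed immersion $d:D\to\blowup_{Z}Y$, whose open complement is precisely $u:U\to\blowup_{Z}Y$ since $p$ restricts to an isomorphism off $D$. The normal bundle of $D$ in $\blowup_{Z}Y$ is canonically identified with $\mathcal{O}_{D}(1)$, so purity supplies a cofiber sequence $U_{+}\to\blowup_{Z}Y_{+}\to Th(\mathcal{O}_{D}(1))$ in $\mathcal{M}$. Applying $s_{j}$ and using the isomorphism $s_{j}(\iota_{U,Y})$ to replace $s_{j}(U_{+})$ by $s_{j}(Y_{+})$ makes the first map $r_{j}$; since $r_{j}$ is a section of $p_{j}$, the resulting distinguished triangle also splits. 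The most delicate step I anticipate is the geometric identification of the normal bundle of the exceptional divisor with $\mathcal{O}_{D}(1)$ and matching the paper's sign conventions throughout, which is a classical but nontrivial input.
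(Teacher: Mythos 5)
Your proposal is correct and follows essentially the same route as the paper: the Morel--Voevodsky blow-up square giving the Mayer--Vietoris distinguished triangle, the identification $F_{0}(\iota_{U,Y})\in sWB_{j}$ for $j\le n$ combined with theorems \ref{thm.modelstructures-slicefiltration}(\ref{thm.modelstructures-slicefiltration.b}), \ref{thm.orthogonal=wbirationalTspectra} and lemma \ref{lemma.computingslices-via-birinvs}, the splitting from $p\circ u=\iota_{U,Y}$, and homotopy purity for $d:D\to\blowup_{Z}Y$ with normal bundle $\mathcal O_{D}(1)$. The only cosmetic difference is that the paper asserts the blow-up square is homotopy cocartesian only after smashing with $S^{1}$ (citing \cite[Prop. 2.29 and Rmk. 2.30]{MR1813224}), which yields the same distinguished triangle in $\stablehomotopyX$ that you obtain.
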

\begin{proof}
	It follows from \cite[Prop. 2.29 and Rmk. 2.30]{MR1813224} that the following 
	square is homotopy cocartesian in $\mathcal M$
		\[	\xymatrix{ S^{1}\wedge D_{+} \ar[d]_-{id\wedge q} \ar[r]^-{id\wedge d}& 
					S^{1}\wedge \blowup _{Z}Y_{+} \ar[d]^-{id\wedge p} \\
						 S^{1}\wedge Z_{+} \ar[r]_-{id\wedge i} & S^{1}\wedge Y_{+}}
		\]
	Thus, we deduce that the following diagram is a distinguished triangle in $\stablehomotopyX$
		\[	\xymatrix{F_{0}(D_{+}) \ar[rr]^-{\binom {-F_{0}(d)} {F_{0}(q)}} &&
						F_{0}(\blowup _{Z}Y_{+})\oplus F_{0}(Z_{+})
						\ar[rr]^-{(F_{0}(p), F_{0}(i))} && F_{0}(Y_{+})}
		\]
	Since the slices $s_{j}$ are triangulated functors, it follows that diagram 
	(\ref{eq.blowuptriangle}) is a distinguished triangle in $\stablehomotopyX$.
	
	Now, we prove that $s_{j}(\iota _{U,Y})$ is an isomorphism for $j\leq n$.
	By lemma \ref{lemma.computingslices-via-birinvs}, it suffices to show that
	$s_{<j}(\iota _{U,Y})$ is an isomorphism in $\stablehomotopyX$ for $j\leq n+1$.
	But this follows directly from theorems \ref{thm.orthogonal=wbirationalTspectra} 
	and \ref{thm.modelstructures-slicefiltration}(\ref{thm.modelstructures-slicefiltration.b}) 
	since $F_{0}(\iota _{U,Y})$ is clearly a weak equivalence in $\jwbiratTspectraX$ for $j\leq n$.
	
	Thus, $r_{j}$ is well defined for $j\leq n$, and the following diagram shows
	that it gives a splitting for the distinguished triangle (\ref{eq.split-blowuptriangle})
		\begin{equation}
				\label{diagram.proof-split}
			\begin{array}{c}
			\xymatrix{s_{j}(U_{+}) \ar[rr]^-{s_{j}(u)} \ar@{=}[d] &&
					s_{j}(\blowup _{Z}Y_{+}) \ar[d]^-{p_{j}} \\
					s_{j}(U_{+}) \ar[rr]_-{s_{j}(\iota _{U,Y})} && s_{j}(Y_{+})}
			\end{array}
		\end{equation}
	Finally, since the normal bundle of the closed embedding $d:D\rightarrow \blowup _{Z}Y$
	is given by $\mathcal O _{D}(1)$, we deduce from the Morel-Voevodsky homotopy
	purity theorem (see \cite[Thm. 2.23]{MR1813224}) that the following diagram
	is a distinguished triangle in $\stablehomotopyX$
		\[	\xymatrix{s_{j}(U_{+}) \ar[rr]^-{s_{j}(u)} &&
						s_{j}(\blowup _{Z}Y_{+}) \ar[r] & 
						s_{j}(Th(\mathcal O _{D}(1)))}
		\]
	Combining this distinguished triangle with diagram (\ref{diagram.proof-split}) above,
	we conclude that diagram (\ref{eq.second.split-blowuptriangle})
	is a split distinguished triangle in $\stablehomotopyX$ for $j\leq n$.
\end{proof}

\end{section}


\section*{Acknowledgements}
	The author would like to thank Marc Levine for bringing to his attention the 
	connection between slices and birational cohomology theories, as well as for 
	all his stimulating comments and questions, and also thank Chuck Weibel
	for his interest in this work and several suggestions which helped to
	improve the exposition.


\bibliography{biblio_sboct}

\begin{thebibliography}{10}

\bibitem{K-theory/0596}
{Bruno Kahn and R. Sujatha}.
\newblock {Birational motives, I}.
\newblock {Preprint, October 30, 2002, K-theory Preprint Archives,
  http://www.math.uiuc.edu/K-theory/0596/}.

\bibitem{MR1944041}
P.~S. Hirschhorn.
\newblock {\em Model categories and their localizations}, volume~99 of {\em
  Mathematical Surveys and Monographs}.
\newblock American Mathematical Society, Providence, RI, 2003.

\bibitem{MR2197578}
J.~Hornbostel.
\newblock Localizations in motivic homotopy theory.
\newblock {\em Math. Proc. Cambridge Philos. Soc.}, 140(1):95--114, 2006.

\bibitem{MR1650134}
M.~Hovey.
\newblock {\em Model categories}, volume~63 of {\em Mathematical Surveys and
  Monographs}.
\newblock American Mathematical Society, Providence, RI, 1999.

\bibitem{MR1860878}
M.~Hovey.
\newblock Spectra and symmetric spectra in general model categories.
\newblock {\em J. Pure Appl. Algebra}, 165(1):63--127, 2001.

\bibitem{MR1787949}
J.~F. Jardine.
\newblock Motivic symmetric spectra.
\newblock {\em Doc. Math.}, 5:445--553 (electronic), 2000.

\bibitem{MR2365658}
M.~Levine.
\newblock The homotopy coniveau tower.
\newblock {\em J. Topol.}, 1(1):217--267, 2008.

\bibitem{MR1813224}
F.~Morel and V.~Voevodsky.
\newblock {${\bf A}^1$}-homotopy theory of schemes.
\newblock {\em Inst. Hautes {\'E}tudes Sci. Publ. Math.}, (90):45--143 (2001),
  1999.

\bibitem{MR1308405}
A.~Neeman.
\newblock The {G}rothendieck duality theorem via {B}ousfield's techniques and
  {B}rown representability.
\newblock {\em J. Amer. Math. Soc.}, 9(1):205--236, 1996.

\bibitem{MR1812507}
A.~Neeman.
\newblock {\em Triangulated categories}, volume 148 of {\em Annals of
  Mathematics Studies}.
\newblock Princeton University Press, Princeton, NJ, 2001.

\bibitem{MR2576905}
P.~Pelaez.
\newblock Mixed motives and the slice filtration.
\newblock {\em C. R. Math. Acad. Sci. Paris}, 347(9-10):541--544, 2009.

\bibitem{MR2807904}
P.~Pelaez.
\newblock Multiplicative properties of the slice filtration.
\newblock {\em Ast{\'e}risque}, (335):xvi+289, 2011.

\bibitem{MR0338129}
D.~Quillen.
\newblock Higher algebraic {$K$}-theory. {I}.
\newblock In {\em Algebraic {$K$}-theory, {I}: {H}igher {$K$}-theories ({P}roc.
  {C}onf., {B}attelle {M}emorial {I}nst., {S}eattle, {W}ash., 1972)}, pages
  85--147. Lecture Notes in Math., Vol. 341. Springer, Berlin, 1973.

\bibitem{MR0223432}
D.~G. Quillen.
\newblock {\em Homotopical algebra}.
\newblock Lecture Notes in Mathematics, No. 43. Springer-Verlag, Berlin, 1967.

\bibitem{MR1648048}
V.~Voevodsky.
\newblock {$\bold A^1$}-homotopy theory.
\newblock In {\em Proceedings of the {I}nternational {C}ongress of
  {M}athematicians, {V}ol. {I} ({B}erlin, 1998)}, number Extra Vol. I, pages
  579--604 (electronic), 1998.

\bibitem{MR1977582}
V.~Voevodsky.
\newblock Open problems in the motivic stable homotopy theory. {I}.
\newblock In {\em Motives, polylogarithms and Hodge theory, Part I (Irvine, CA,
  1998)}, volume~3 of {\em Int. Press Lect. Ser.}, pages 3--34. Int. Press,
  Somerville, MA, 2002.

\bibitem{MR2101286}
V.~Voevodsky.
\newblock On the zero slice of the sphere spectrum.
\newblock {\em Tr. Mat. Inst. Steklova}, 246(Algebr. Geom. Metody, Svyazi i
  Prilozh.):106--115, 2004.

\end{thebibliography}
\bibliographystyle{abbrv}

\end{document}